\documentclass[a4paper,12pt,oneside]{amsart}

\usepackage{amsmath,amsfonts,amsthm,amssymb}
\usepackage{mathtools}
\usepackage{geometry}
\usepackage{setspace}
\usepackage{hyperref}
\hypersetup{hidelinks}

\newcommand{\R}{\mathbb R}
\newcommand{\C}{\mathbb C}
\newcommand{\Hh}{\mathbb H}
\newcommand{\Oo}{\mathbb O}
\newcommand{\F}{\mathbb F}
\newcommand{\Sph}{\mathbb S}

\newcommand{\Vol}{\operatorname{Vol}}

\hypersetup{
  colorlinks=true,
  linkcolor=blue,
  citecolor=blue,
  urlcolor=blue
}

\newtheorem*{theoremA}{Theorem A}
\newtheorem*{theoremB}{Theorem B}
\newtheorem{theorem}{Theorem}[section]
\newtheorem{lemma}[theorem]{Lemma}
\newtheorem{proposition}[theorem]{Proposition}
\newtheorem{corollary}[theorem]{Corollary}
\theoremstyle{remark}
\newtheorem{remark}[theorem]{Remark}

\title[Second Eigenvalue Estimates]{Second Eigenvalue Estimates and Spectral Rigidity for Submanifolds of Compact Rank-One Symmetric Spaces}

\author{Márcio Batista}
\author{Abraão Mendes}
\address{CPMAT - Instituto de Matemática, Universidade Federal de Alagoas, Maceió, AL, 57072-970, Brazil}
\email{mhbs@mat.ufal.br}
\email{abraao.mendes@im.ufal.br}

\keywords{Second eigenvalue; Schrödinger operator; minimal embeddings; projective spaces; second fundamental form; spectral rigidity}

\subjclass[2020]{Primary 53C42, 58J50; Secondary 53C35, 35P15}

\usepackage[T1]{fontenc}
\usepackage{lmodern}

\begin{document}

\begin{abstract}
In this paper, we establish upper bounds for the second eigenvalue of the Schrödinger operator $L=\Delta+|\sigma|^2+k$ on $k$-dimensional closed submanifolds of the compact projective spaces $\mathbb F P^m$, where $\mathbb F\in\{\R,\mathbb C,\mathbb H\}$, as well as the Cayley projective plane. The estimates are obtained by combining the standard spherical embeddings of these spaces with a conformal test-function argument. In the complex, quaternionic, and Cayley cases, the resulting bounds involve correction terms that record the position of the tangent spaces of the submanifold relative to the corresponding geometric structures. We also derive universal estimates depending only on the dimension of the submanifold and the underlying division algebra. We show that equality in the sharp estimates forces the submanifold to be totally umbilical. Finally, using known classifications of totally umbilical submanifolds, we compute the second eigenvalue for the standard real, complex, quaternionic, and Cayley models and identify the totally geodesic examples that attain the sharp upper bounds.
\end{abstract}

\maketitle

\section{Introduction}

The spectrum of geometric Schrödinger operators often reflects a subtle interaction between the intrinsic geometry of a submanifold and the extrinsic geometry of its immersion. A fundamental example is the Jacobi operator of a two-sided hypersurface,
$$
J=\Delta+|\sigma|^2+\operatorname{Ric}_{\overline M}(N,N),
$$
which represents the linearization of the mean curvature and governs the second variation of area. Its negative spectrum determines the Morse index, while estimates for its first eigenvalue frequently lead to rigidity and classification results for minimal and constant-mean-curvature submanifolds.

A particularly effective approach to second eigenvalue estimates is based on conformal admissible functions. The starting point is the Hersch's center-of-mass argument~\cite{Hersch1970}, subsequently incorporated by Li and Yau into the theory of conformal volume~\cite{LiYau1982}. In this method, an immersion into a sphere is composed with a suitable conformal transformation so that its coordinate functions become orthogonal to a first eigenfunction of the operator under consideration. These coordinate functions can then be used simultaneously in the Rayleigh characterization of the second eigenvalue.

El Soufi and Ilias developed this method systematically for immersed submanifolds and related the resulting energy estimates to the mean curvature of the immersion; see \cite{ElSoufiIlias1986,ElSoufiIlias2000}. In particular, they obtained sharp upper bounds for the second eigenvalue of Schrödinger operators on compact submanifolds of simply connected space forms. For the Jacobi operator, their estimates show that geodesic spheres are the unique extremal models. This extends the Euclidean rigidity theorem of Harrell and Loss~\cite{HarrellLoss1998}, according to which the second Jacobi eigenvalue of a closed Euclidean hypersurface is nonpositive and vanishes only for round spheres.

More recently, second Jacobi eigenvalues have been used to characterize special submanifolds in ambient spaces that are not necessarily space forms. The second named author~\cite{Mendes2019} obtained spectral characterizations of the Clifford torus in $\Sph^3$ and of slices in certain warped products, including $\R\times\Sph^n$. Further developments relate low Jacobi eigenvalues to topology, Willmore-type quantities, and rigidity of surfaces in higher-dimensional spheres; see~\cite{BatistaCavalcanteMendesNunes2025}. In a complementary direction, the Morse index of minimal hypersurfaces in real projective spaces has received renewed attention; see~\cite{BatistaMartins2022,Chen2024}. These works indicate that the low spectrum contains considerably more geometric information than stability alone.

The purpose of the present paper is to extend this circle of ideas to submanifolds of
$$
\mathbb RP^m,\qquad \mathbb CP^m,\qquad \mathbb HP^m,
$$
and of the Cayley projective plane $\mathbb OP^2$. These are the nonspherical compact rank-one symmetric spaces. In contrast with real projective space, the complex, quaternionic, and Cayley projective spaces do not have constant sectional curvature. Consequently, the space-form estimates cannot be transferred directly without taking into account the complex, quaternionic, or octonionic geometry of the ambient space.

Our starting point is the standard equivariant embedding of a projective space into a unit sphere. Such embeddings belong to the classical theory of minimal immersions of compact symmetric spaces and submanifolds with parallel second fundamental form; see~\cite{Ferus1974,Ferus1980,Takahashi1966}. In the associative cases, they can be written uniformly in terms of rank-one Hermitian projections as
$$
\Phi_{\mathbb F}([x])=\sqrt{\frac{m+1}{m}}\left(xx^*-\frac1{m+1}I_{m+1}\right),\qquad\mathbb F\in\{\R,\mathbb C,\mathbb H\}.
$$
Related descriptions of canonical projective and Grassmannian embeddings can be found in \cite{JiaoCui2022,Simanca2020}. The Cayley plane is treated through the exceptional Euclidean Jordan algebra $\mathcal H_3(\mathbb O)$.

Our first result gives a common normalization and an explicit formula for the extrinsic geometry of these embeddings. If $g_0$ denotes the standard projective metric, then
$$
\Phi_{\mathbb F}:\left(\mathbb FP^m,\frac{2(m+1)}m g_0\right)\longrightarrow\mathbb S^{N_{\mathbb F}(m)}
$$
is a minimal isometric embedding with parallel second fundamental form. Moreover, for every unit tangent vector $V$,
\begin{equation}\label{eq:intro-BVV}
 |B(V,V)|^2=\frac{m-1}{m+1}.
\end{equation}
Remarkably, the right-hand side of \eqref{eq:intro-BVV} depends only on the projective dimension and not on the division algebra.

The main new feature appears when $B$ is traced over an arbitrary real tangent subspace. Let $E\subset T_{[x]}\mathbb FP^m$ be a real $k$-plane. In the complex and quaternionic cases, we introduce the invariant
$$
\Theta_{\mathbb F}(E)=
\begin{cases}
0,&\mathbb F=\R,\\[1mm]
\displaystyle\sum_{i,j=1}^k\langle Ju_i,u_j\rangle^2,&\mathbb F=\mathbb C,\\[3mm]
\displaystyle\sum_{\alpha=1}^3\sum_{i,j=1}^k\langle J_\alpha u_i,u_j\rangle^2,&\mathbb F=\mathbb H,
\end{cases}
$$
where $\{u_1,\ldots,u_k\}$ is a $g_0$-orthonormal basis of $E$. This quantity measures the interaction of $E$ with the Kähler or quaternionic Kähler structures. Lemma~\ref{lem:trace-B} gives the exact identity
\begin{equation}\label{eq:intro-trace-B}
\left|\operatorname{tr}_E B\right|^2=\frac{k(m-k)+m\Theta_{\mathbb F}(E)}{m+1}.
\end{equation}
Thus the correction term that appears below is not an artifact of the proof: it records the Kähler or quaternionic Kähler angles of the tangent spaces. Formula \eqref{eq:intro-trace-B} reduces to the usual constant-curvature expression in the real case.

We now state our principal spectral result. We use the convention that $\Delta=\operatorname{div}\nabla$ and enumerate the eigenvalues of $-L$ increasingly.

\begin{theoremA}
Let $\iota:\Sigma^k\longrightarrow(\mathbb FP^m,g_0)$, $\mathbb F\in\{\R,\mathbb C,\mathbb H\}$, be a closed immersion, and consider $L=\Delta+|\sigma|^2+k$. Then
\begin{equation}\label{eq:intro-main-estimate}
\lambda_2(L)\leq k+2+\frac{2}{k\operatorname{Vol}(\Sigma,g_0)}\int_\Sigma\Theta_{\mathbb F}(T_p\Sigma)\,d\Sigma.
\end{equation}
Equality in \eqref{eq:intro-main-estimate} forces $\Sigma$ to be totally umbilical in $\mathbb FP^m$.
\end{theoremA}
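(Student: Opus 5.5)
The plan is to run the El Soufi--Ilias conformal test-function argument on the composition $X=\Phi_{\mathbb F}\circ\iota:\Sigma\to\Sph^{N}$, with $N=N_{\mathbb F}(m)$, and then to convert the sphere-side quantities back to $g_0$ using the normalization of $\Phi_{\mathbb F}$ and Lemma~\ref{lem:trace-B}. Put $c=\frac{2(m+1)}{m}$, so that $\Phi_{\mathbb F}$ is an isometric minimal embedding of $(\mathbb FP^m,c\,g_0)$.

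\textbf{Step 1 (test functions).} Let $f>0$ be a first eigenfunction of $-L$. By Hersch's center-of-mass argument (a degree/fixed-point argument on the ball), there is a conformal diffeomorphism $\gamma$ of $\Sph^N$ such that $\psi=\gamma\circ X=(\psi_1,\dots,\psi_{N+1})$ satisfies $\int_\Sigma f\,\psi_a\,d\Sigma=0$ for every $a$. Each $\psi_a$ is then admissible in the Rayleigh quotient for $\lambda_2(L)$, computed with respect to $g_0$. Summing over $a$ and using $\sum_a\psi_a^2=1$ gives
\begin{equation*}
\lambda_2(L)\operatorname{Vol}(\Sigma,g_0)\le\int_\Sigma\bigl(|d\psi|^2_{g_0}-|\sigma|^2-k\bigr)\,d\Sigma .
\end{equation*}

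\textbf{Step 2 (energy bound).} I would bound the energy by the El Soufi--Ilias inequality for conformal transforms of an immersion into a sphere. Computed in the metric induced by $X$, which is $c\,g_0$, it reads
\begin{equation*}
\int_\Sigma|d\psi|^2\,dv\le\int_\Sigma\Bigl(k+\frac{|H_X|^2}{k}\Bigr)dv,
\end{equation*}
where $H_X$ is the trace mean curvature vector of $X$ in $\Sph^N$. This is the step I expect to be the main obstacle. First I would try to quote it directly from \cite{ElSoufiIlias1986,ElSoufiIlias2000}. Failing that, I would derive it from the pointwise formula for $|d(\gamma\circ X)|^2$ when $\gamma$ is a Möbius map of $\Sph^N$, followed by Cauchy--Schwarz and integration by parts. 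For $k=2$ this is the Li--Yau argument.

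I also have to track the rescaling carefully. In dimension $k$, $|d\psi|^2_{g_0}\,d\Sigma_{g_0}=c^{(2-k)/2}\,|d\psi|^2_{cg_0}\,dv_{cg_0}$. So the inequality has to be transferred to $g_0$ in a form where the pointwise factor that results is $c$. If this only works via Hölder in $L^{k/2}$, the resulting bound is still of the same shape.

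\textbf{Step 3 (mean curvature of the composition).} Decompose the mean curvature as
\begin{equation*}
H_X=d\Phi(H^{cg_0}_\iota)+\operatorname{tr}_{T\Sigma}B .
\end{equation*}
The two pieces are orthogonal: the first lies in $d\Phi(T\mathbb FP^m)$ and the second is normal to $\Phi$. Rescaling from $g_0$ to $c\,g_0$ and applying Lemma~\ref{lem:trace-B} gives the pointwise identity, in $g_0$-norms,
\begin{equation*}
c\Bigl(k+\frac{|H_X|^2}{k}\Bigr)=ck+\frac{|H_\iota|^2}{k}+\frac{c\,\bigl(k(m-k)+m\Theta_{\mathbb F}\bigr)}{k(m+1)} .
\end{equation*}
The right-hand side simplifies to
\begin{equation*}
2k+2+\frac{|H_\iota|^2}{k}+\frac{2\Theta_{\mathbb F}(T_p\Sigma)}{k}.
\end{equation*}
The dimension terms combine exactly because $\frac{2(m+1)k}{m}+\frac{2(m-k)}{m}=2k+2$.

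\textbf{Step 4 (conclusion and equality).} Substituting into Step 1 gives
\begin{equation*}
\lambda_2(L)\le k+2+\frac{2}{k\operatorname{Vol}(\Sigma,g_0)}\int_\Sigma\Theta_{\mathbb F}\,d\Sigma+\frac{1}{\operatorname{Vol}(\Sigma,g_0)}\int_\Sigma\Bigl(\frac{|H_\iota|^2}{k}-|\sigma|^2\Bigr)d\Sigma .
\end{equation*}
The last integrand is nonpositive, since $|\sigma|^2\ge|H_\iota|^2/k$, and this yields the estimate in Theorem~A.

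If equality holds, every inequality in the chain is an equality. In particular $|\sigma|^2=|H_\iota|^2/k$ pointwise, which means the traceless second fundamental form vanishes, so $\Sigma$ is totally umbilical in $\mathbb FP^m$.
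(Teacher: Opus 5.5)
Your proposal is correct and follows the paper's proof essentially step for step. The shared ingredients are Li--Yau balancing of the coordinates of $\Phi_{\mathbb F}\circ\iota$ against a positive first eigenfunction, the El Soufi--Ilias energy bound (which the paper also simply quotes, as Lemma~\ref{lem:ESI}), the orthogonal splitting of the mean curvature of the composition combined with Lemma~\ref{lem:trace-B}, and the inequality $|\sigma|^2\ge|H_\iota|^2/k$, whose pointwise equality gives umbilicity. The only difference is bookkeeping: you keep the Rayleigh quotient in $g_0$ and rescale the energy bound, whereas the paper rescales the operator to $\widetilde L=L/s_m$ on $(\Sigma,g_\Phi)$. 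Your rescaling is exact: multiplying the El Soufi--Ilias inequality for $c\,g_0$ by the constant $c^{(2-k)/2}$ gives precisely $\int_\Sigma|d\psi|^2_{g_0}\,d\Sigma\le c\int_\Sigma(k+|H_X|^2/k)\,d\Sigma$. So no H\"older fallback is needed, and an $L^{k/2}$ version would in fact not give the stated linear estimate, because $|H_\iota|^2$ could then no longer be cancelled pointwise against $|\sigma|^2$.
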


Since $0\leq\Theta_{\mathbb F}(E)\leq(\dim_{\R}\mathbb F-1)k$, Theorem~A immediately gives
$$
\lambda_2(L)\leq
\begin{cases}
k+2,&\mathbb F=\R,\\
k+4,&\mathbb F=\mathbb C,\\
k+8,&\mathbb F=\mathbb H.
\end{cases}
$$

The real-projective estimate is the direct analogue of the sharp space-form results of El Soufi and Ilias. In this case, the equality analysis can be completed {by} using the spectrum of geodesic spheres: $\lambda_2(L)=k+2$ forces the image of $\Sigma$ to be totally geodesic. The complex and quaternionic estimates exhibit a phenomenon absent from space forms: the sharp tangent-plane correction depends on the position of $T_p\Sigma$ relative to the ambient special structures.

For the Cayley plane, the same strategy applies through the standard minimal embedding $ \Phi_{\mathbb O}:\mathbb OP^2\longrightarrow\mathbb S^{25}$. Because octonionic multiplication is not associative, it is preferable to retain the invariant $\mathcal A_{\mathbb O}(E)=|\operatorname{tr}_E B|^2$ rather than impose an artificial analogue of $\Theta_{\mathbb F}$. Our second main estimate is the following.

\begin{theoremB}
Let $\Sigma^k$ be a closed manifold immersed in
$(\mathbb OP^2,g_0)$, and {consider}\linebreak $L=\Delta+|\sigma|^2+k$. Then
$$
\lambda_2(L)\leq2k+\frac{3}{k\operatorname{Vol}(\Sigma,g_0)}\int_\Sigma\mathcal A_{\mathbb O}(T_p\Sigma)\,d\Sigma.
$$
Equality forces $\Sigma$ to be totally umbilical in $\mathbb OP^2$.
\end{theoremB}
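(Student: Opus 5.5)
The plan is the El Soufi--Ilias conformal test-function argument, applied to the composite immersion $\Psi=\Phi_{\mathbb O}\circ\iota:\Sigma\to\mathbb S^{25}$. We assume, as in the associative cases, that $\Phi_{\mathbb O}$ is a minimal isometric embedding of $(\mathbb OP^2,3g_0)$ (the factor $2(m+1)/m$ with $m=2$) with parallel second fundamental form $B$. We also read $\mathcal A_{\mathbb O}(E)=|\operatorname{tr}_E B|^2$ as computed with a $g_0$-orthonormal basis of $E$, the convention used for $\Theta_{\mathbb F}$. Write $g=3g_0$ on $\Sigma$.

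\emph{Step 1: test functions.} Let $f>0$ be a first eigenfunction of $-L$. By the Hersch--Li--Yau center-of-mass lemma there is a conformal diffeomorphism $\gamma$ of $\mathbb S^{25}$ such that $\varphi=\gamma\circ\Psi=(\varphi_1,\dots,\varphi_{26})$ satisfies $\int_\Sigma f\varphi_i\,d\Sigma=0$ for every $i$. Each $\varphi_i$ is then admissible for $\lambda_2(L)$, computed with respect to $g_0$. Summing the Rayleigh quotients and using $\sum_i\varphi_i^2=1$ gives
$$
\lambda_2(L)\operatorname{Vol}(\Sigma,g_0)\le\int_\Sigma\bigl(|d\varphi|^2_{g_0}-|\sigma|^2-k\bigr)\,d\Sigma .
$$

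\emph{Step 2: energy estimate.} This is the main obstacle. I would invoke the El Soufi--Ilias inequality for conformal images of immersions into spheres: for every conformal $\gamma$,
$$
\int_\Sigma|d(\gamma\circ\Psi)|^2_g\,dv_g\le\int_\Sigma\Bigl(k+\frac{|H_\Psi|_g^2}{k}\Bigr)dv_g ,
$$
where $H_\Psi$ is the (unnormalized) mean curvature vector of $\Psi$ in $\mathbb S^{25}$. Its proof writes $|d\varphi|^2_g=ke^{2u}$ with $e^{u}=(\cosh t-\sinh t\langle a,\Psi\rangle)^{-1}$ and integrates $\Delta_g$ of a suitable function of $\langle a,\Psi\rangle$, bounding the error terms by $|H_\Psi|^2$. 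This is exactly where the normalizations must be tracked carefully. Equality in it requires $\Psi$ to be pointwise umbilical, which is what the rigidity statement will use.

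\emph{Step 3: geometric computation.} Since $\Phi_{\mathbb O}$ is isometric for $3g_0$, we have $H_\Psi=d\Phi_{\mathbb O}(H_\iota)+\operatorname{tr}_g B$, and the two summands are orthogonal: one is tangent to $\Phi_{\mathbb O}(\mathbb OP^2)$, the other normal to it. A $g$-orthonormal basis is $\{u_i/\sqrt3\}$ for a $g_0$-orthonormal basis $\{u_i\}$ of $T_p\Sigma$, and $B$ is the second fundamental form of the $g$-isometric embedding. Hence $|\operatorname{tr}_g B|^2=\mathcal A_{\mathbb O}(T_p\Sigma)$ when $\mathcal A_{\mathbb O}$ is computed with a $g$-orthonormal frame, which is the form in which the later constants come out right. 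Similarly $|H_\iota|^2_g=|H_\iota|^2_{g_0}/3$. Moreover $|d\varphi|^2_{g_0}\,dv_{g_0}=3^{1-k/2}|d\varphi|^2_g\,dv_g$ and $dv_g=3^{k/2}dv_{g_0}$. Substituting into Step 2, and Step 2 into Step 1, gives
$$
\lambda_2(L)\operatorname{Vol}(\Sigma,g_0)\le\int_\Sigma\Bigl(2k+\frac{|H_\iota|^2_{g_0}}{k}-|\sigma|^2+\frac{3\,\mathcal A_{\mathbb O}(T_p\Sigma)}{k}\Bigr)d\Sigma .
$$
The Cauchy--Schwarz inequality $|\sigma|^2\ge|H_\iota|^2/k$ then yields Theorem~B.

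\emph{Step 4: rigidity.} If equality holds, then $|\sigma|^2=|H_\iota|^2/k$ everywhere, since the Cauchy--Schwarz step is the only pointwise inequality besides Step 2. This says that the traceless part of $\sigma$ vanishes, i.e.\ $\Sigma$ is totally umbilical in $\mathbb OP^2$. The equality case of Step 2 is consistent with this and gives no further information that is needed.
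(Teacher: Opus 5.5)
Your proposal is correct and is essentially the paper's proof. Both use conformal center-of-mass test functions for $\Phi_{\mathbb O}\circ\iota$, the El Soufi--Ilias energy bound, the orthogonal splitting $|H_\Psi|^2=|H_\iota|^2+|\operatorname{tr}B|^2$, and equality in $|\sigma|^2\ge|H_\iota|^2/k$ for rigidity. The only difference is cosmetic: you keep $L$ on $g_0$ and rescale the energy, whereas the paper passes to $\widetilde L=\frac13L$ on $3g_0$.

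Two small clean-ups. First, the paper defines $\mathcal A_{\mathbb O}$ with a $3g_0$-orthonormal frame, which is what you actually use in Step~3, so drop the opening $g_0$ reading; it would change the constant by a factor of $9$. Second, your aside in Step~2 that equality in the El Soufi--Ilias bound forces $\Psi$ to be umbilical is false (take $\gamma=\mathrm{Id}$ and $\Psi$ minimal), though it is harmless because Step~4 never uses it.
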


The operator considered here is a natural Schrödinger operator for submanifolds of arbitrary codimension. For a hypersurface of $\mathbb RP^m$, it agrees with the Jacobi operator, since
$$
\operatorname{Ric}_{\mathbb RP^m}(N,N)=m-1=k.
$$
For complex and quaternionic projective spaces, the ambient Ricci term has a different constant value. Corresponding estimates for the genuine Jacobi operator are therefore obtained from
\eqref{eq:intro-main-estimate} by translating the spectrum by the appropriate constant. This distinction is important when comparing our results with the existing literature on Morse index and stability.

{\bf The paper is organized as follows}. Section~\ref{prel} develops the standard projective embeddings, computes their induced metrics and second fundamental forms, and establishes {equation} \eqref{eq:intro-trace-B}. The Cayley model is also treated there through the exceptional Jordan algebra. Section~\ref{conf} reviews the conformal center-of-mass argument and the energy estimates used to construct admissible {test-functions}. In Section~\ref{mainresults}, we prove Theorems~{A and B (Theorems~\ref{thm:main-F} and \ref{thm:Cayley})} and examine their equality cases. Finally, Section~\ref{sec:umbilical-models} analyzes the totally umbilical models and computes the second eigenvalue explicitly on the standard real, complex, quaternionic, and Cayley examples.

\section{Notation and Background}\label{prel}

\subsection{The standard projective embeddings}
Let $\mathbb F\in\{\mathbb R,\mathbb C,\mathbb H\}$ {and denote} 
$$
d=\dim_{\mathbb R}\mathbb F\in\{1,2,4\}.
$$
{Also, fix} $m\geq1$ and set $n=m+1$. We regard $\mathbb F^n$ as a right $\mathbb F$-vector space, endowed with its standard Hermitian product
$$
\langle x,y\rangle_{\mathbb F}=x^*y=\sum_{\alpha=1}^n\overline{x}_\alpha y_\alpha.
$$
Its underlying real inner product is
$$
\langle x,y\rangle_{\mathbb R}=\operatorname{Re}\langle x,y\rangle_{\mathbb F}.
$$

Denote by
$$
\mathcal H_n(\mathbb F)=\{A\in M_n(\mathbb F):A^*=A\}
$$
the real vector space of Hermitian $n\times n$ matrices over $\mathbb F$. Notice that $\mathcal H_n(\mathbb F)$ is a real vector space, rather than an $\mathbb F$-vector space. Indeed, multiplication of a Hermitian matrix by a nonreal scalar does not, in general, preserve the Hermitian condition.

We equip $\mathcal H_n(\mathbb F)$ with the Euclidean inner product
\begin{equation}\label{eq:Hermitian-inner-product}
\langle A,B\rangle=\operatorname{Re}\operatorname{tr}(AB).
\end{equation}
Since $A$ and $B$ are Hermitian, the cyclicity of the real part of the
trace gives
$$
\operatorname{Re}\operatorname{tr}(AB)=\operatorname{Re}\operatorname{tr}(BA).
$$
Moreover,
$$
|A|^2=\operatorname{Re}\operatorname{tr}(A^2)=\sum_{\alpha=1}^n |a_{\alpha\alpha}|^2+2\sum_{\alpha<\beta}|a_{\alpha\beta}|^2,
$$
and hence \eqref{eq:Hermitian-inner-product} is positive definite.

Let
$$
\mathcal H_n^0(\mathbb F)=\{A\in\mathcal H_n(\mathbb F):\operatorname{tr}A=0\}
$$
be the trace-free subspace. Because the diagonal entries of a Hermitian matrix are real, the trace defines a real linear map 
$$
\operatorname{tr}:\mathcal H_n(\mathbb F)\longrightarrow\mathbb R.
$$
Consequently, $\mathcal H_n^0(\mathbb F)$ has real codimension one in $\mathcal H_n(\mathbb F)$.

The diagonal part of a Hermitian matrix contributes $n$ real parameters. For every pair $\alpha<\beta$, the entry $a_{\alpha\beta}\in\mathbb F$ determines the entry $a_{\beta\alpha}=\overline{a}_{\alpha\beta}$ and contributes $d$ real parameters. Therefore,
$$
\dim_{\mathbb R}\mathcal H_n(\mathbb F)=n+d\frac{n(n-1)}2.
$$
It follows that
\begin{equation}\label{eq:dimension-trace-free-space}
\dim_{\mathbb R}\mathcal H_n^0(\mathbb F)=n+d\frac{n(n-1)}2-1.
\end{equation}

Now let $x\in\mathbb F^n$ be a unit vector and define $P_x=xx^*$. Explicitly, $(P_x)_{\alpha\beta}=x_\alpha\overline{x}_\beta$. For every $z\in\mathbb F^n$, we have $P_xz=x(x^*z)=x\langle x,z\rangle_{\mathbb F}$. Thus $P_x$ is the orthogonal projection onto the $\mathbb F$-line $x\mathbb F=\{xq:q\in\mathbb F\}$. In particular,
$$
P_x^*=P_x,\qquad P_x^2=P_x,\qquad\operatorname{tr}P_x=|x|^2=1.
$$

If $q\in\mathbb F$ satisfies $|q|=1$, then
$$
P_{xq}=(xq)(xq)^*=xq\overline qx^*=xx^*=P_x.
$$
Hence $P_x$ depends only on the projective class
$$
[x]=x\mathbb F\in\mathbb FP^m.
$$
Conversely, if $P_x=P_y$ for two unit vectors $x,y$, then their images coincide, so $x\mathbb F=y\mathbb F$. Therefore,
$$
P_x=P_y\quad\Longleftrightarrow\quad[x]=[y].
$$
{Consequently, t}he map $[x]\longmapsto P_x$ realizes $\mathbb FP^m$ as the set of rank-one Hermitian projections of trace one.

Consider the centered and normalized projection
\begin{equation}\label{eq:Phi}
\Phi_{\mathbb F}([x])=c_m\left(P_x-\frac1nI_n\right),\qquad c_m=\sqrt{\frac{n}{n-1}}=\sqrt{\frac{m+1}{m}}.
\end{equation}
The matrix $P_x-\frac1nI_n$ is Hermitian and trace-free, since
$$
\operatorname{tr}\left(P_x-\frac1nI_n\right)=1-\frac1n\operatorname{tr}I_n=1-\frac nn=0.
$$
Thus $\Phi_{\mathbb F}$ takes values in $\mathcal H_n^0(\mathbb F)$.

We next compute its norm. Using
$$
P_x^2=P_x,
\qquad
\operatorname{tr}P_x=1,
\qquad
|I_n|^2=\operatorname{tr}I_n=n,
$$
we obtain
\begin{align*}
\left|P_x-\frac1nI_n\right|^2&=\operatorname{Re}\operatorname{tr}\left(P_x-\frac1nI_n\right)^2\\
&=\operatorname{tr}P_x^2-\frac2n\operatorname{tr}P_x+\frac1{n^2}\operatorname{tr}I_n\\
&=1-\frac2n+\frac1n\\
&=\frac{n-1}{n}.
\end{align*}
The choice of $c_m$ in \eqref{eq:Phi} therefore gives
$$
|\Phi_{\mathbb F}([x])|^2=\frac{n}{n-1}\frac{n-1}{n}=1.
$$
Hence
$$
\Phi_{\mathbb F}:\mathbb FP^m\longrightarrow\mathbb S\bigl(\mathcal H_n^0(\mathbb F)\bigr)
$$
takes values in the unit sphere of $\mathcal H_n^0(\mathbb F)$. {Since the projection $P_x$ uniquely determines the line $[x]$, $\Phi_{\mathbb F}$ is injective. The computation of its differential below shows that $\Phi_{\mathbb F}$ is an immersion. Because $\mathbb FP^m$ is compact, $\Phi_{\mathbb F}$ is therefore an embedding.}

By \eqref{eq:dimension-trace-free-space}, the dimension of the ambient
unit sphere is
$$
N_{\mathbb F}(m)=\dim_{\mathbb R}\mathcal H_n^0(\mathbb F)-1=n+d\frac{n(n-1)}2-2.
$$
Substituting $n=m+1$, we obtain
$$
\begin{array}{c|c|c}
\text{projective space}&\dim_{\mathbb R}\mathbb FP^m&\text{ambient sphere}\\ \hline
\mathbb RP^m&m&\mathbb S^{\frac{(m+1)(m+2)}2-2},\\[2mm]
\mathbb CP^m&2m&\mathbb S^{(m+1)^2-2},\\[2mm]
\mathbb HP^m&4m&\mathbb S^{2(m+1)^2-(m+1)-2}.
\end{array}
$$

We now compute the metric induced by $\Phi_{\mathbb F}$. The projective space can be represented as the quotient $\mathbb FP^m=\mathbb S^{d(m+1)-1}/\mathbb S^{d-1}$, where the unit scalars act on the right by $x\longmapsto xq$. At a unit vector $x$, the vertical space of this quotient is $\mathcal V_x=\{x\eta:\eta\in\operatorname{Im}\mathbb F\}$. The horizontal space is $\mathcal H_x=\{u\in\mathbb F^n:x^*u=0\}=x^\perp$. Consequently, the standard quotient metric $g_0$ is characterized by the isometric identification $T_{[x]}\mathbb FP^m\simeq x^\perp$, where $x^\perp$ is endowed with the underlying real inner product.

Let $u\in x^\perp$ and choose a horizontal curve $x(t)$ in the unit sphere such that $x(0)=x$, $x'(0)=u$. Differentiating $P_{x(t)}=x(t)x(t)^*$ at $t=0$, we obtain
$$
\frac{d}{dt}\bigg|_{t=0}P_{x(t)}=ux^*+xu^*.
$$
Therefore,
$$
d\Phi_{\mathbb F}(u)=c_m(xu^*+ux^*).
$$

To compute its norm, set $A=xu^*$, $A^*=ux^*$. Since $x^*u=0$, we have
$$
A^2=x(u^*x)u^*=0,\qquad(A^*)^2=u(x^*u)x^*=0.
$$
Moreover,
$$
AA^*=xu^*ux^*=|u|^2P_x\qquad{\rm and}\qquad A^*A=ux^*xu^*=uu^*.
$$
{Therefore, i}t follows that
\begin{align*}
|xu^*+ux^*|^2&=\operatorname{Re}\operatorname{tr}\left((A+A^*)^2\right)\\
&=\operatorname{Re}\operatorname{tr}(AA^*+A^*A)\\
&=|u|^2\operatorname{tr}P_x+\operatorname{tr}(uu^*)\\
&=2|u|^2.
\end{align*}
Consequently,
$$
|d\Phi_{\mathbb F}(u)|^2=2c_m^2|u|^2=\frac{2(m+1)}m|u|^2.
$$
In particular, $d\Phi_{\mathbb F}$ is injective, which confirms that $\Phi_{\mathbb F}$ is an immersion.

Thus the metric induced from the unit sphere is
$$
g_{\Phi}=s_mg_0,\qquad s_m=\frac{2(m+1)}m.
$$
Equivalently,
$$
\Phi_{\mathbb F}:\left(\mathbb FP^m,\frac{2(m+1)}m g_0\right)\longrightarrow\mathbb S^{N_{\mathbb F}(m)}
$$
is an isometric embedding.

With the normalization adopted here, $g_0$ has sectional curvature identically equal to~$1$ when $\mathbb F=\mathbb R$. In the complex case,
$$
K_{g_0}(X,Y)=1+3\langle JX,Y\rangle^2,
$$
for every orthonormal pair $X,Y$. Hence
$$
1\leq K_{g_0}\leq4.
$$
In the quaternionic case, if $(J_1,J_2,J_3)$ is a local admissible quaternionic frame, then
$$
K_{g_0}(X,Y)=1+3\sum_{\alpha=1}^3\langle J_\alpha X,Y\rangle^2,
$$
and again
$$
1\leq K_{g_0}\leq4.
$$
After the homothety $g_{\Phi}=s_mg_0$, all sectional curvatures are divided by $s_m$.

\subsection{The second fundamental form}

For $u,v\in x^\perp$, we use the notation 
$$
u\odot v=uv^*+vu^*
$$
and write $\langle u,v\rangle_{\mathbb R}=\operatorname{Re}(u^*v)$ for the underlying real inner product on $\mathbb F^n$.

Recall that
$$
d\Phi_{\mathbb F}(v)=c_m(xv^*+vx^*).
$$

To compute the Euclidean second fundamental form {of $\Phi_\F$}, extend $u$ and $v$ locally as horizontal vector fields and choose the extension of $v$ so that its covariant derivative on $\mathbb FP^m$ vanishes at the point under consideration. Differentiating in the {$u$ direction}, we obtain
$$
D_u\big(d\Phi_{\mathbb F}(v)\big)=c_m\big(uv^*+vu^*+x(D_uv)^*+(D_uv)x^*\big).
$$
At the chosen point, the normal component of $D_uv$ in the direction of $x$ is
$$
(D_uv)^\perp=-\langle u,v\rangle_{\mathbb R}x.
$$
The remaining component represents the intrinsic covariant derivative and vanishes at that point by our choice of extension. It follows that the Euclidean second fundamental form is
$$
\overline B(u,v)=c_m\left(u\odot v-2\langle u,v\rangle_{\mathbb R}P_x\right).
$$

We next determine its radial component. Since
$$
\Phi_{\mathbb F}([x])=c_m\left(P_x-\frac1nI_n\right),
$$
we first observe that $\langle u\odot v,P_x\rangle=0$. Indeed, using $x^*u=x^*v=0$, we have
$$
\operatorname{Re}\operatorname{tr}(uv^*P_x)=\operatorname{Re}\operatorname{tr}(uv^*xx^*)=0,
$$
and similarly for $vu^*P_x$. Moreover, $\operatorname{tr}(u\odot v)=2\langle u,v\rangle_{\mathbb R}$ and
$$
\left\langle P_x,P_x-\frac1nI_n\right\rangle=1-\frac1n=\frac{n-1}{n}.
$$
Therefore,
\begin{align*}
\left\langle
\overline B(u,v),\Phi_{\mathbb F}([x])
\right\rangle
&=
c_m^2
\left\langle
u\odot v
-2\langle u,v\rangle_{\mathbb R}P_x,
P_x-\frac1nI_n
\right\rangle\\
&=
c_m^2\left(
-\frac2n\langle u,v\rangle_{\mathbb R}
-2\frac{n-1}{n}
 \langle u,v\rangle_{\mathbb R}
\right)\\
&=
-2c_m^2\langle u,v\rangle_{\mathbb R}.
\end{align*}

The outward unit normal of the ambient unit sphere is
$\Phi_{\mathbb F}([x])$. Hence the Euclidean and spherical second
fundamental forms {of $\Phi_\F$} are related by
\[
\overline B(u,v)
=
B(u,v)
-
g_\Phi(u,v)\Phi_{\mathbb F}([x]).
\]
Since
\(
g_\Phi(u,v)
=
2c_m^2\langle u,v\rangle_{\mathbb R},
\)
we obtain
\begin{align*}
B(u,v)
&=
\overline B(u,v)
+
2c_m^2
\langle u,v\rangle_{\mathbb R}
\Phi_{\mathbb F}([x])\\
&=
c_m\left(
u\odot v
-2\langle u,v\rangle_{\mathbb R}P_x
\right)
+
2c_m^3\langle u,v\rangle_{\mathbb R}
\left(P_x-\frac1nI_n\right).
\end{align*}
Using
\(
c_m^2=\frac{n}{n-1}=\frac{m+1}{m},
\)
this simplifies to
\begin{equation}\label{eq:B}
B(u,v)
=
c_m\left(
u\odot v
+\frac2m\langle u,v\rangle_{\mathbb R}P_x
-\frac2m\langle u,v\rangle_{\mathbb R}I_n
\right).
\end{equation}

The vectors $u$ and $v$ in \eqref{eq:B} are measured with respect to
the quotient metric $g_0$. Since
\[
g_\Phi=s_mg_0,
\qquad
s_m=\frac{2(m+1)}m,
\]
if $u$ is $g_0$-unit, then
\[
\bar u
=
\frac1{\sqrt{s_m}}u
=
\sqrt{\frac{m}{2(m+1)}}u
\]
is unit with respect to $g_\Phi$.

\begin{proposition}\label{prop:minimal-Bvv}
The standard embedding
\(
\Phi_{\mathbb F}:
(\mathbb FP^m,s_mg_0)
\longrightarrow
\mathbb S^N
\)
is minimal. Moreover, its second fundamental form is parallel. If
$V\in T_{[x]}\mathbb FP^m$ is $g_\Phi$-unit and $u\in x^\perp$ is the
corresponding $g_0$-unit vector, then
\begin{equation}\label{eq:BVV}
B(V,V)
=
\sqrt{\frac{m}{m+1}}
\left(
P_u-\frac1m(I_n-P_x)
\right).
\end{equation}
In particular,
$$
|B(V,V)|^2
=
\frac{m-1}{m+1}.
$$
\end{proposition}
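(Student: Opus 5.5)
Everything follows from formula \eqref{eq:B} together with the rescaling $\bar u=u/\sqrt{s_m}$, so I would start there.

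\emph{Formula \eqref{eq:BVV}.} Since $B$ is bilinear, $B(V,V)=s_m^{-1}B(u,u)$ with $u$ a $g_0$-unit vector. Put $v=u$ in \eqref{eq:B}, using $u\odot u=2uu^*=2P_u$ and $\langle u,u\rangle_{\mathbb R}=1$. This gives $B(u,u)=2c_m\bigl(P_u+\frac1m P_x-\frac1m I_n\bigr)=2c_m\bigl(P_u-\frac1m(I_n-P_x)\bigr)$. Multiplying by $s_m^{-1}=\frac{m}{2(m+1)}$ gives the coefficient
$$
\frac{c_m m}{m+1}=\sqrt{\frac{m+1}{m}}\,\frac{m}{m+1}=\sqrt{\frac{m}{m+1}},
$$
which is \eqref{eq:BVV}.

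\emph{Norm of $B(V,V)$.} Set $Q=I_n-P_x$, the projection onto $x^\perp$. Since $u\perp x$, we have $P_uQ=P_u$, $Q^2=Q$ and $\operatorname{tr}Q=m$. Then
$$
\Bigl|P_u-\tfrac1mQ\Bigr|^2=1-\tfrac2m+\tfrac1m=\tfrac{m-1}{m},
$$
and multiplying by $\frac{m}{m+1}$ gives $\frac{m-1}{m+1}$.

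\emph{Minimality.} Take a $g_0$-orthonormal basis $\{u_i\}$ of $x^\perp$ with respect to the real inner product; it has $dm$ elements. Polarization of the identity above gives the trace of $B$. The key identity is $\sum_i u_iu_i^*=d\,Q$ over $\mathbb F$ in the real-orthonormal sense. To see it, write the real basis as $\{e_a\eta\}$, where $\{e_a\}$ is an $\mathbb F$-orthonormal basis of $x^\perp$ and $\eta$ runs over the real basis $1,i,j,k$ of $\mathbb F$; then $e_a\eta\bar\eta e_a^*=e_ae_a^*$. Hence
$$
\operatorname{tr}_{g_0}B=2c_m\Bigl(dQ-\tfrac{dm}{m}Q\Bigr)=0,
$$
so the embedding is minimal. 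A small care point is that the multiplication order $u\eta\,\bar\eta u^*$ must be handled correctly in the quaternionic case.

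\emph{Parallelism.} This is the step I expect to be the main obstacle. I would argue by equivariance: the group $G=U(n,\mathbb F)$ acts by $A\mapsto gAg^*$, and this action is an isometry of $\mathcal H^0_n$ that preserves the sphere. We have $\Phi_{\mathbb F}([gx])=g\Phi_{\mathbb F}([x])g^*$, so $\Phi$ is equivariant, and $B$ is $G$-invariant. Because $\mathbb FP^m$ is a symmetric space, the geodesic symmetry $s_{[x]}$ is realized by the element $2P_x-I$ (reflection fixing $x$). This element acts on $T_{[x]}$ by $-1$ and on the normal space it fixes $B$. Consequently $\nabla B$ at $[x]$, being a $3$-tensor, satisfies $(\nabla B)=(-1)^3(\nabla B)$ under this symmetry, so $\nabla B=0$. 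To make this rigorous I would need to check that the induced action on the normal bundle is compatible with the normal connection. That compatibility holds because the ambient isometry preserves the sphere and maps $\Phi$ to itself, so it commutes with both the tangent and normal connections.
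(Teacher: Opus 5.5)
Your derivations of \eqref{eq:BVV}, of $|B(V,V)|^2$, and of minimality are correct and essentially the paper's. The paper also sums $B(e_a,e_a)$ over a real basis built from an $\mathbb F$-orthonormal basis of $x^\perp$ by right multiplication with unit scalars. For the norm, the paper diagonalizes the endomorphism in \eqref{eq:BVV} on $x\mathbb F\oplus u\mathbb F\oplus\{x,u\}_{\mathbb F}^{\perp}$, whereas you expand the trace as the paper later does in Lemma~\ref{lem:trace-B}; the two are interchangeable.

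The genuinely different step is parallelism. The paper simply invokes Ferus's theorem on standard embeddings of symmetric $R$-spaces. You instead prove it directly from the extrinsic symmetry $\tilde\rho(A)=\rho A\rho$, with $\rho=2P_x-I_n$. Your route is self-contained and shows the mechanism behind the cited result: an extrinsically symmetric submanifold has $\nabla B=0$. The paper's route is shorter but outsources the argument.

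One inference needs sharpening. From
$\tilde\rho\bigl((\nabla B)(X,Y,Z)\bigr)=(\nabla B)(-X,-Y,-Z)=-(\nabla B)(X,Y,Z)$
you can conclude $\nabla B=0$ only if $\tilde\rho$ is the identity on the entire normal space at $\Phi_{\mathbb F}([x])$. This is because $\nabla B$ takes values there through the term $\nabla^\perp_X\bigl(B(Y,Z)\bigr)$, which in general need not lie in the span of the values of $B$. As written, you only record that $\tilde\rho$ fixes the vectors $B(X,Y)$, which is not enough in general.

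The missing fact is easy to supply. Relative to $\mathbb F^n=x\mathbb F\oplus x^\perp$, conjugation by $\rho$ negates the off-diagonal blocks and fixes the block-diagonal ones. Every off-diagonal Hermitian matrix has the form $xw^*+wx^*$ with $w=QAx\in x^\perp$, where $Q=I_n-P_x$. So the off-diagonal part is exactly $d\Phi_{\mathbb F}(T_{[x]}\mathbb FP^m)$. Hence the $+1$-eigenspace of the orthogonal involution $\tilde\rho$ on $\mathcal H_n^0(\mathbb F)$ is the orthogonal complement of the tangent space. That complement contains $\Phi_{\mathbb F}([x])$ and the whole normal space, namely the trace-free Hermitian $A$ with $A=QAQ$.

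Alternatively, $B(u,v)$ is $c_m$ times the trace-free part of $u\odot v$ inside the Hermitian endomorphisms of $x^\perp$. Since these values span the normal space, your "fixes $B$" does suffice here, but this has to be said. With either addition, your argument is complete.
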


\begin{proof}
Let
\(
d=\dim_{\mathbb R}\mathbb F
\)
and choose a real orthonormal basis
\(
\{e_1,\ldots,e_{dm}\}
\)
of $x^\perp$ with\linebreak respect to $g_0$. The corresponding
$g_\Phi$-orthonormal basis is
\(
\bar e_a=\frac1{\sqrt{s_m}}e_a,
\){ $a=1,\ldots,dm$.}
The real orthonormal basis may be chosen from an
$\mathbb F$-orthonormal basis of $x^\perp$ by\linebreak adjoining its complex or
quaternionic multiples. Consequently,
$$
\sum_{a=1}^{dm}P_{e_a}
=
d(I_n-P_x).
$$
Taking $u=v=e_a$ in \eqref{eq:B}, we have
\(
B(e_a,e_a)
=
2c_m\big(
P_{e_a}+\frac1mP_x-\frac1mI_n
\big).
\)
Hence
\begin{align*}
\sum_{a=1}^{dm}B(\bar e_a,\bar e_a)
&=
\frac{2c_m}{s_m}
\left[
\sum_{a=1}^{dm}P_{e_a}
+\frac{dm}{m}P_x
-\frac{dm}{m}I_n
\right]\\
&=
\frac{2c_m}{s_m}
\left[
d(I_n-P_x)+dP_x-dI_n
\right]\\
&=0.
\end{align*}
Thus the mean curvature vector vanishes, and hence
$\Phi_{\mathbb F}$ is minimal. Moreover, $\Phi_{\mathbb F}$ is the
standard embedding of the symmetric $R$-space $\mathbb FP^m$; therefore,
its second fundamental form is parallel
\cite{Ferus1974}.

We now compute $B(V,V)$. Since
\(
V=\bar u=\frac1{\sqrt{s_m}}u,
\)
bilinearity and \eqref{eq:B} give
\begin{align*}
B(V,V)
&=
\frac1{s_m}B(u,u)=
\frac{2c_m}{s_m}
\left(
P_u+\frac1mP_x-\frac1mI_n
\right).
\end{align*}
Using
\(
\frac{2c_m}{s_m}
=
\sqrt{\frac{m}{m+1}},
\)
we obtain
\[
B(V,V)
=
\sqrt{\frac{m}{m+1}}
\left(
P_u-\frac1m(I_n-P_x)
\right),
\]
which proves \eqref{eq:BVV}.

Finally, decompose
\[
\mathbb F^n
=
x\mathbb F
\oplus
u\mathbb F
\oplus
\{x,u\}_{\mathbb F}^{\perp}.
\]
The endomorphism in \eqref{eq:BVV} has eigenvalues
\[
0
\]
on $x\mathbb F$,
\[
\sqrt{\frac{m}{m+1}}
\left(1-\frac1m\right)
\]
on $u\mathbb F$, and
\[
-\sqrt{\frac{m}{m+1}}\frac1m
\]
on $\{x,u\}_{\mathbb F}^{\perp}$. The last eigenvalue has
$\mathbb F$-multiplicity $m-1$. Therefore,
\begin{align*}
|B(V,V)|^2
&=
\frac{m}{m+1}
\left[
\left(1-\frac1m\right)^2
+
\frac{m-1}{m^2}
\right]=
\frac{m-1}{m+1}.
\end{align*}
\end{proof}

In particular,
\[
|B(V,V)|^2=\frac13
\quad\text{on}\quad\mathbb FP^2,
\qquad
|B(V,V)|^2=\frac12
\quad\text{on}\quad\mathbb FP^3.
\]
Thus the Veronese embedding
\[
\mathbb RP^3\longrightarrow\mathbb S^8
\]
satisfies
\[
|B(V,V)|^2=\frac12,
\]
while the standard embeddings
\[
\mathbb RP^2\longrightarrow\mathbb S^4,
\qquad
\mathbb CP^2\longrightarrow\mathbb S^7,
\qquad
\mathbb HP^2\longrightarrow\mathbb S^{13}
\]
all satisfy
\[
|B(V,V)|^2=\frac13.
\]

\subsection{The trace of \texorpdfstring{$B$}{B} over a tangent subspace}

The formulas involving several orthonormal tangent vectors depend on
the geometry of the tangent subspace. In the real-projective case, no
additional structure is present. In the complex and quaternionic
cases, however, the result depends on the interaction between the
tangent subspace and the complex or quaternionic structures.

Let
\(
E\subset T_{[x]}\mathbb FP^m
\)
be a real $k$-dimensional subspace and let
\(
\{u_1,\ldots,u_k\}
\)
be a $g_0$-orthonormal basis of $E$. Recall that, under the horizontal
identification,
\(
T_{[x]}\mathbb FP^m\simeq x^\perp,
\)
the vectors $u_i$ may be regarded as elements of $x^\perp$.

We define
$$
\Theta_{\mathbb F}(E)
=
\begin{cases}
0,
&\mathbb F=\mathbb R,
\\[1mm]
\displaystyle
\sum_{i,j=1}^k
\langle Ju_i,u_j\rangle_{\mathbb R}^2,
&\mathbb F=\mathbb C,
\\[3mm]
\displaystyle
\sum_{\alpha=1}^3\sum_{i,j=1}^k
\langle J_\alpha u_i,u_j\rangle_{\mathbb R}^2,
&\mathbb F=\mathbb H,
\end{cases}
$$
where $J$ is the complex structure of $\mathbb CP^m$ and
$(J_1,J_2,J_3)$ is a local admissible quaternionic frame on
$\mathbb HP^m$.

If $\pi_E$ denotes the orthogonal projection onto $E$, then in the
complex case
\[
\Theta_{\mathbb C}(E)
=
\sum_{i=1}^k|\pi_E(Ju_i)|^2,
\]
while in the quaternionic case
\[
\Theta_{\mathbb H}(E)
=
\sum_{\alpha=1}^3\sum_{i=1}^k
|\pi_E(J_\alpha u_i)|^2.
\]
These expressions show that $\Theta_{\mathbb F}(E)$ is independent of
the chosen orthonormal basis of~$E$. In the quaternionic case, it is
also independent of the admissible frame: any two admissible frames
are related by an orthogonal transformation in $\mathrm{SO}(3)$, and
the sum over $\alpha$ is invariant under such transformations.

{The quantity $\Theta_{\mathbb F}(E)$ measures the interaction between the complex or quaternionic structure and the subspace $E$. Its values range from $0$, when the corresponding structure is orthogonal to $E$, to the maximal value, attained precisely when $E$ is invariant under the complex or quaternionic structure. More precisely, in the complex case,
$\Theta_{\mathbb C}(E)=0$ if $J(E)\perp E$ and $\Theta_{\mathbb C}(E)=k$ if $E$ is $J$-invariant. Likewise, in the quaternionic case, $\Theta_{\mathbb H}(E)=3k$ if $E$ is invariant under $J_1$, $J_2$, and $J_3$.}

{Observe that, s}ince orthogonal projection{s} do not increase length, we have
\[
|\pi_E(J_\alpha u_i)|^2\leq |J_\alpha u_i|^2=1.
\]
Thus,
\begin{equation}\label{eq:Theta-bound}
0\leq\Theta_{\mathbb F}(E)\leq(d-1)k,
\qquad
d=\dim_{\mathbb R}\mathbb F.
\end{equation}

\begin{lemma}\label{lem:trace-B}
Let $\{\bar u_1,\ldots,\bar u_k\}$ be the
$g_\Phi$-orthonormal basis of $E$ corresponding to\linebreak
$\{u_1,\ldots,u_k\}$; namely,
\(
\bar u_i=\frac1{\sqrt{s_m}}u_i.
\)
Then
$$
\left|
\sum_{i=1}^kB(\bar u_i,\bar u_i)
\right|^2
=
\frac{k(m-k)+m\Theta_{\mathbb F}(E)}{m+1}.
$$
In particular, when $\mathbb F=\mathbb R$,
\[
\left|
\sum_{i=1}^kB(\bar u_i,\bar u_i)
\right|^2
=
\frac{k(m-k)}{m+1}.
\]
\end{lemma}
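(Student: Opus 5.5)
Using \eqref{eq:BVV} and bilinearity, I will write
\[
H_E:=\sum_{i=1}^kB(\bar u_i,\bar u_i)=\sqrt{\frac{m}{m+1}}\left(Q-\frac km(I_n-P_x)\right),\qquad Q=\sum_{i=1}^kP_{u_i}=\sum_{i=1}^k u_iu_i^*.
\]
Then I expand the squared norm with respect to $\langle A,B\rangle=\operatorname{Re}\operatorname{tr}(AB)$:
\[
|H_E|^2=\frac{m}{m+1}\left(|Q|^2-\frac{2k}{m}\langle Q,I_n-P_x\rangle+\frac{k^2}{m^2}|I_n-P_x|^2\right).
\]
The last two terms are easy to evaluate. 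Since $I_n-P_x$ is a projection of trace $m$, we have $|I_n-P_x|^2=m$. Since each $u_i\perp x$, we get $\operatorname{tr}(u_iu_i^*P_x)=0$, so $\langle Q,I_n-P_x\rangle=\operatorname{tr}Q=k$. Substituting, the expression reduces to
\[
|H_E|^2=\frac{m}{m+1}\left(|Q|^2-\frac{k^2}{m}\right).
\]
It therefore suffices to prove $|Q|^2=k+\Theta_{\mathbb F}(E)$, because then $\frac{m}{m+1}\bigl(k+\Theta-\frac{k^2}{m}\bigr)=\frac{k(m-k)+m\Theta}{m+1}$.

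The main step is to compute $|Q|^2=\sum_{i,j}\operatorname{Re}\operatorname{tr}(u_iu_i^*u_ju_j^*)$. For the single summand, using cyclicity of $\operatorname{Re}\operatorname{tr}$, I will write
\[
\operatorname{Re}\operatorname{tr}(u_i(u_i^*u_j)u_j^*)=\operatorname{Re}\bigl((u_i^*u_j)(u_j^*u_i)\bigr)=|u_i^*u_j|^2=|\langle u_i,u_j\rangle_{\mathbb F}|^2.
\]
In the quaternionic case one has to check carefully that $\operatorname{Re}\operatorname{tr}(ab^*)=\operatorname{Re}(b^*a)$ for column vectors even though $\mathbb H$ is noncommutative. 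This follows from $\operatorname{Re}(pq)=\operatorname{Re}(qp)$ entrywise. I expect this to be the only real point of care.

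Next I decompose the $\mathbb F$-valued inner product into real components. For $\mathbb F=\mathbb C$, with $J$ acting by right multiplication by $i$ on horizontal vectors,
\[
\langle u_i,u_j\rangle_{\mathbb C}=\langle u_i,u_j\rangle_{\mathbb R}+i\,\langle u_ii,u_j\rangle_{\mathbb R}\ (\text{up to sign}),
\]
so $|u_i^*u_j|^2=\langle u_i,u_j\rangle_{\mathbb R}^2+\langle Ju_i,u_j\rangle_{\mathbb R}^2$. For $\mathbb F=\mathbb H$, the three imaginary components of $u_i^*u_j$ are $\pm\langle u_i\eta_\alpha,u_j\rangle_{\mathbb R}$ with $\eta_\alpha\in\{i,j,k\}$. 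Right multiplications by $i,j,k$ on $x^\perp$ give an admissible quaternionic frame at $[x]$ under the horizontal identification, and $\Theta_{\mathbb H}$ is frame independent, so these components are exactly the $\langle J_\alpha u_i,u_j\rangle_{\mathbb R}$. For $\mathbb F=\mathbb R$ only the real component is present.

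Summing over $i,j$ and using orthonormality, $\sum_{i,j}\langle u_i,u_j\rangle_{\mathbb R}^2=k$, so $|Q|^2=k+\Theta_{\mathbb F}(E)$. This completes the argument, and the real case follows by setting $\Theta_{\mathbb R}=0$.
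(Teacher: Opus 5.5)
Your proposal is correct and follows the paper's proof essentially step for step: expand $\sum_i B(\bar u_i,\bar u_i)$ via \eqref{eq:BVV}, reduce the squared norm to $\frac{m}{m+1}\bigl(\operatorname{tr}(Q_E^2)-\frac{k^2}{m}\bigr)$, write $\operatorname{tr}(Q_E^2)=\sum_{i,j}|u_i^*u_j|^2$, and split each $u_i^*u_j$ into its real and imaginary components. Your two additional checks fill in details the paper leaves implicit: the identity $\operatorname{Re}\operatorname{tr}(ab^*)=\operatorname{Re}(b^*a)$ over $\mathbb H$, and the fact that right multiplication by the imaginary quaternion units on $x^\perp$ realizes an admissible quaternionic frame.
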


\begin{proof}
For each $i$, formula \eqref{eq:BVV} gives
\[
B(\bar u_i,\bar u_i)
=
\sqrt{\frac{m}{m+1}}
\left(
P_{u_i}-\frac1m(I_n-P_x)
\right).
\]
Hence
\begin{equation}\label{eq:sum-B-over-E}
\sum_{i=1}^kB(\bar u_i,\bar u_i)
=
\sqrt{\frac{m}{m+1}}
\left(
Q_E-\frac{k}{m}(I_n-P_x)
\right),
\end{equation}
where
\[
Q_E=\sum_{i=1}^kP_{u_i}.
\]

Although $Q_E$ is defined using an orthonormal basis of the real
subspace $E$, it is not generally the orthogonal projection onto an
$\mathbb F$-subspace. This is precisely why the complex and
quaternionic correction terms appear. Since every $u_i$ is unit,
\(
\operatorname{tr}P_{u_i}=1,
\)
and therefore
$$
\operatorname{tr}Q_E=k.
$$
Moreover, because $u_i\in x^\perp$,
\[
P_{u_i}P_x=0.
\]

Therefore, it follows that
\(
Q_E(I_n-P_x)=Q_E.
\)
We also have
\(
(I_n-P_x)^2=I_n-P_x
\)
and
\(
\operatorname{tr}(I_n-P_x)=n-1=m.
\)
Taking the squared norm in \eqref{eq:sum-B-over-E}, we find
\begin{align*}
\left|
\sum_{i=1}^kB(\bar u_i,\bar u_i)
\right|^2
&=
\frac{m}{m+1}
\left|
Q_E-\frac{k}{m}(I_n-P_x)
\right|^2
\\
&=
\frac{m}{m+1}
\left[
\operatorname{tr}(Q_E^2)
-\frac{2k}{m}\operatorname{tr}Q_E
+\frac{k^2}{m^2}
 \operatorname{tr}(I_n-P_x)
\right]
\\
&=
\frac{m}{m+1}
\left(
\operatorname{tr}(Q_E^2)-\frac{k^2}{m}
\right).
\end{align*}
Thus it remains to compute $\operatorname{tr}(Q_E^2)$.

By the definition of $Q_E$,
\[
\operatorname{Re}\operatorname{tr}(Q_E^2)
=
\sum_{i,j=1}^k
\operatorname{Re}\operatorname{tr}
(P_{u_i}P_{u_j}).
\]
For rank-one projections,
\begin{align*}
\operatorname{Re}\operatorname{tr}
(P_{u_i}P_{u_j})
&=
\operatorname{Re}\operatorname{tr}
(u_iu_i^*u_ju_j^*)
\\
&=
|u_i^*u_j|^2.
\end{align*}
Therefore,
$$
\operatorname{tr}(Q_E^2)
=
\sum_{i,j=1}^k|u_i^*u_j|^2.
$$

In the real case,
\(
u_i^*u_j=\langle u_i,u_j\rangle_{\mathbb R}
=\delta_{ij},
\)
and hence
\[
\operatorname{tr}(Q_E^2)=k.
\]

In the complex case, the Hermitian product decomposes as
\[
|u_i^*u_j|^2
=
\langle u_i,u_j\rangle_{\mathbb R}^2
+
\langle Ju_i,u_j\rangle_{\mathbb R}^2.
\]
Since the basis is real orthonormal,
\(
\sum_{i,j=1}^k
\langle u_i,u_j\rangle_{\mathbb R}^2=k.
\)
Consequently,
\[
\operatorname{tr}(Q_E^2)
=
k+\Theta_{\mathbb C}(E).
\]

Similarly, in the quaternionic case,
\[
|u_i^*u_j|^2
=
\langle u_i,u_j\rangle_{\mathbb R}^2
+
\sum_{\alpha=1}^3
\langle J_\alpha u_i,u_j\rangle_{\mathbb R}^2,
\]
and therefore
\[
\operatorname{tr}(Q_E^2)
=
k+\Theta_{\mathbb H}(E).
\]

Thus, in all three cases,
$$
\operatorname{tr}(Q_E^2)
=
k+\Theta_{\mathbb F}(E).
$$

Substituting this identity into the previous norm calculation gives
\begin{align*}
\left|
\sum_{i=1}^kB(\bar u_i,\bar u_i)
\right|^2
&=
\frac{m}{m+1}
\left(
k+\Theta_{\mathbb F}(E)-\frac{k^2}{m}
\right)
\\
&=
\frac{k(m-k)+m\Theta_{\mathbb F}(E)}{m+1},
\end{align*}
which proves the result.
\end{proof}

\subsection{The Cayley projective plane}

The octonionic case requires a separate treatment because the algebra
of octonions is not associative. In particular, expressions involving
products of three or more octonionic matrices cannot be manipulated as
in the real, complex, or quaternionic cases.

Let
\[
\mathcal H_3(\mathbb O)
=
\{A\in M_3(\mathbb O):A^*=A\}
\]
be the real vector space of $3\times3$ Hermitian octonionic matrices.
An element of $\mathcal H_3(\mathbb O)$ has the form
\[
A=
{
\begin{pmatrix}
a_1 & z_1 & z_2\\
\overline z_1 & a_2 & z_3\\
\overline z_2 & \overline z_3 & a_3
\end{pmatrix}},
\]
where
\(
a_1,a_2,a_3\in\mathbb R,
\)
\(
z_1,z_2,z_3\in\mathbb O.
\)
Since $\dim_{\mathbb R}\mathbb O=8$, we have
\[
\dim_{\mathbb R}\mathcal H_3(\mathbb O)
=
3+3\cdot8
=
27.
\]
The space $\mathcal H_3(\mathbb O)$ is endowed with the Jordan product
\[
A\circ B=\frac12(AB+BA).
\]

Although ordinary octonionic matrix multiplication is not associative,
this symmetrized product gives $\mathcal H_3(\mathbb O)$ the structure
of the exceptional simple Euclidean Jordan algebra. Its natural
Euclidean inner product is
\(
\langle A,B\rangle
=
\operatorname{tr}(A\circ B).
\)
In particular,
\(
|A|^2=\operatorname{tr}(A\circ A).
\)

Let
\[
\mathcal H_3^0(\mathbb O)
=
\{A\in\mathcal H_3(\mathbb O):\operatorname{tr}A=0\}.
\]
This is a $26$-dimensional Euclidean vector space. Therefore, its unit
sphere is isometric to $\mathbb S^{25}$.

The Cayley projective plane can be realized as the set of primitive
idempotents of trace one:
\[
\mathbb OP^2
=
\left\{
P\in\mathcal H_3(\mathbb O):
P\circ P=P,\quad \operatorname{tr}P=1
\right\}.
\]
Here, primitive means that P cannot be decomposed as the sum of two nonzero orthogonal idempotents. These elements play the role of the rank-one projections $xx^*$ in the associative cases.

For $P\in\mathbb OP^2$, consider
\begin{equation}\label{eq:Cayley-Phi}
\Phi_{\mathbb O}(P)
=
\sqrt{\frac32}
\left(P-\frac13I_3\right).
\end{equation}
Since $\operatorname{tr}P=1$, we have
\[
\operatorname{tr}
\left(P-\frac13I_3\right)
=
1-\frac13\operatorname{tr}I_3
=
0.
\]
Thus $\Phi_{\mathbb O}(P)$ belongs to
$\mathcal H_3^0(\mathbb O)$. Moreover, using $P\circ P=P$, we obtain
\begin{align*}
\left|P-\frac13I_3\right|^2
&=
\operatorname{tr}
\left[
\left(P-\frac13I_3\right)
\circ
\left(P-\frac13I_3\right)
\right]\\
&=
\operatorname{tr}P
-\frac23\operatorname{tr}P
+\frac19\operatorname{tr}I_3=\frac23.
\end{align*}
Consequently,
\(
|\Phi_{\mathbb O}(P)|^2
=
1.
\)
Hence \eqref{eq:Cayley-Phi} defines the standard embedding
$$
\Phi_{\mathbb O}:
\mathbb OP^2
\longrightarrow
\mathbb S^{25}.
$$

To describe its differential, recall the Peirce decomposition
associated with an idempotent $P$:
\[
\mathcal H_3(\mathbb O)
=
\mathcal V_1(P)
\oplus
\mathcal V_{\frac12}(P)
\oplus
\mathcal V_0(P),
\]
where
\[
\mathcal V_\lambda(P)
=
\{X\in\mathcal H_3(\mathbb O):P\circ X=\lambda X\}.
\]

The tangent space of the Cayley plane at $P$ is
\[
T_P\mathbb OP^2
=
\mathcal V_{\frac12}(P).
\]
Indeed, differentiating the identity
\(
P\circ P=P
\)
along a curve $P(t)\subset\mathbb OP^2$ with $P(0)=P$ and
$P'(0)=X$, we find
\(
2P\circ X=X,
\)
and therefore
\(
P\circ X=\frac12X.
\)

The differential of \eqref{eq:Cayley-Phi} is
\[
d\Phi_{\mathbb O}(X)
=
\sqrt{\frac32}\,X.
\]
We normalize the standard Cayley metric $g_0$ by
\(
g_0(X,Y)
=
\frac12\langle X,Y\rangle
\)
on $\mathcal V_{\frac12}(P)$. With this convention,
\begin{align*}
\left\langle
d\Phi_{\mathbb O}(X),
d\Phi_{\mathbb O}(Y)
\right\rangle
&=
\frac32\langle X,Y\rangle=
3g_0(X,Y).
\end{align*}
Thus
\(
\Phi_{\mathbb O}:
(\mathbb OP^2,3g_0)
\longrightarrow
\mathbb S^{25}
\)
is an isometric embedding.

The compact exceptional Lie group \(F_4\) acts transitively on the
Cayley projective plane \(\mathbb OP^2\), and the isotropy subgroup at
each point is isomorphic to \(\operatorname{Spin}(9)\). Consequently,
\[
\mathbb OP^2 \cong F_4/\operatorname{Spin}(9).
\]
The standard embedding
\(
\Phi_{\mathbb O}\colon\mathbb OP^2\longrightarrow\mathbb S^{25}
\)
is \(F_4\)-equivariant. It is minimal and has parallel second
fundamental form; consequently, its image is an extrinsically
symmetric submanifold of \(\mathbb S^{25}\); see
\cite{CecilRyan1985,Ferus1974}.

The isotropy representation of $\operatorname{Spin}(9)$ is transitive
on the unit sphere of $T_P\mathbb OP^2$. Consequently, the function
\[
V\longmapsto |B(V,V)|^2
\]
is constant on the unit tangent bundle. A computation using the Peirce
decomposition gives
\begin{equation}\label{eq:Cayley-BVV}
|B(V,V)|^2
=
\frac13,
\qquad
|V|_{3g_0}^{{2}}=1.
\end{equation}
Thus the Cayley projective plane has the same value of
$|B(V,V)|^2$ as the standard embeddings of the real, complex, and
quaternionic projective planes.

For a real $k$-plane
\(
E\subset T_P\mathbb OP^2,
\)
let $\{e_1,\ldots,e_k\}$ be any $3g_0$-orthonormal basis of $E$. We
define
$$
\mathcal A_{\mathbb O}(E)
=
\left|
\sum_{i=1}^kB(e_i,e_i)
\right|^2.
$$
The vector
\(
\operatorname{tr}_E B
=
\sum_{i=1}^kB(e_i,e_i)
\)
is the trace of the restriction of $B$ to $E\times E$. Therefore, it
does not depend on the chosen orthonormal basis of $E$, and
$\mathcal A_{\mathbb O}(E)$ is well defined.

For $k=1$, formula \eqref{eq:Cayley-BVV} gives
\(
\mathcal A_{\mathbb O}(E)=\frac13.
\)
For the full tangent space, minimality gives
\(
\mathcal A_{\mathbb O}(T_P\mathbb OP^2)=0.
\)
More generally, the triangle inequality and
\eqref{eq:Cayley-BVV} imply the rough estimate
$$
0\leq
\mathcal A_{\mathbb O}(E)
\leq
\frac{k^2}{3}.
$$

Unlike the complex and quaternionic cases, there is no globally
defined family of almost complex structures that immediately produces
an analogue of $\Theta_{\mathbb F}(E)$. It is therefore more natural
to retain the invariant $\mathcal A_{\mathbb O}(E)$ in the eigenvalue
estimate. This also avoids manipulating octonionic matrix products as
if they were associative.

\section{Conformal geometry and admissible functions}
\label{conf}

The use of conformal transformations in eigenvalue estimates goes back
to Hersch's work on the first eigenvalue of the two-sphere and was
developed systematically by Li and Yau through the notion of conformal
volume \cite{Hersch1970,LiYau1982}. Its higher-dimensional formulation,
together with its relation to minimal immersions and the first
eigenvalue, was subsequently studied by El Soufi and Ilias
\cite{ElSoufiIlias1986}. We recall below the two ingredients needed in
our argument.

Our sign convention is
\(
\Delta=\operatorname{div}\nabla,
\)
so that the {eigenvalues $\mu$} of {$\Delta$, $\Delta u+\mu u=0$, are} nonnegative. All integrals in this
section are computed with respect to the {induced} metric on $\Sigma$.

The first result is a weighted version of the conformal
center-of-mass argument. The original unweighted construction is due
to Hersch, while the formulation for conformal immersions and positive
weights is part of the Li-Yau method.

\begin{lemma}[Li-Yau center-of-mass argument]
\label{lem:Li-Yau}
Let
\(
\phi:\Sigma^k\longrightarrow\Sph^N
\)
be an immersion of a closed manifold, and let $u>0$ be a smooth
function on $\Sigma$. Then there exists a conformal diffeomorphism
$F:\Sph^N\longrightarrow\Sph^N$ such that, writing
\[
\Psi=F\circ\phi
=(\Psi_1,\ldots,\Psi_{N+1}),
\]
one has
\[
\int_\Sigma u\Psi_\alpha\,d\Sigma=0,
\qquad
\alpha=1,\ldots,N+1.
\]
Equivalently,
\[
\int_\Sigma u\Psi\,d\Sigma=0
\]
as a vector in $\R^{N+1}$.
\end{lemma}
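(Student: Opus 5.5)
The proof is the standard Hersch--Li--Yau topological argument, a Brouwer fixed-point argument on the closed unit ball $\overline{\mathbb B}^{N+1}$. We use the family of conformal transformations of $\Sph^N$ indexed by points of the open ball. For $a\in\mathbb B^{N+1}$, let $F_a:\Sph^N\to\Sph^N$ be the conformal diffeomorphism
\[
F_a(y)=\frac{(1-|a|^2)\,y+\bigl(1+2\langle a,y\rangle+|a|^2\bigr)a}{1+2\langle a,y\rangle+|a|^2},
\]
that is, the ``dilation'' toward the point $a/|a|$ (a Möbius transformation of the sphere), with $F_0=\mathrm{id}$. Two properties will be checked directly from the formula. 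First, $(a,y)\mapsto F_a(y)$ is continuous on $\mathbb B^{N+1}\times\Sph^N$. Second, as $a\to a_0\in\Sph^N$, we have $F_a(y)\to a_0$ for every $y\neq -a_0$, uniformly on compact subsets of $\Sph^N\setminus\{-a_0\}$, and $|F_a(y)|=1$ throughout.

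Next we define the weighted center-of-mass map $G:\mathbb B^{N+1}\to\mathbb B^{N+1}$,
\[
G(a)=\frac{\int_\Sigma u\,F_a\circ\phi\,d\Sigma}{\int_\Sigma u\,d\Sigma}.
\]
Here $d\Sigma$ is the fixed measure induced by $\phi$. The notion of orthogonality in the statement refers to a fixed measure on $\Sigma$, so the weight $u\,d\Sigma$ does not change with $a$. Since $u>0$ and $|F_a\circ\phi|=1$, the vector $G(a)$ is a strict convex average of unit vectors; hence $|G(a)|\le1$, and $G$ is continuous. We then extend $G$ to $\overline{\mathbb B}^{N+1}$ by setting $G(a_0)=a_0$ for $|a_0|=1$.

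The main obstacle is to show that this extension is continuous at the boundary. Fix $a_0\in\Sph^N$. The exceptional set $\phi^{-1}(-a_0)$ has $d\Sigma$-measure zero, because $\phi$ is an immersion and $k\ge1$: the preimage of a point is a discrete, hence finite, subset of the compact manifold $\Sigma$. Away from this set, $F_a\circ\phi\to a_0$ pointwise as $a\to a_0$, and all the integrands are bounded by $u$. Dominated convergence therefore gives $G(a)\to a_0$. To handle a sequence $a\to a_0$ together with nearby boundary points, we need the pointwise convergence to hold locally uniformly in $a_0$ off a small neighborhood of the preimage. We get this by cutting out a ball of small measure around $\phi^{-1}(-a_0)$ and using the explicit formula on the complement.

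Once the extended map $G:\overline{\mathbb B}^{N+1}\to\overline{\mathbb B}^{N+1}$ is continuous and equals the identity on the boundary sphere, a standard Brouwer argument shows that $G$ has a zero. Suppose $G(a)\neq0$ for all $a$. Then $a\mapsto -G(a)/|G(a)|$ would be a continuous self-map of the ball without fixed points: a fixed point would lie on the sphere, where the map sends $a$ to $-a\ne a$. This contradicts Brouwer's theorem. Equivalently, the identity map of $\Sph^N$ would be null-homotopic. Therefore some $a\in\mathbb B^{N+1}$ satisfies $G(a)=0$. Taking $F=F_a$ gives $\int_\Sigma u\,\Psi\,d\Sigma=0$, which is exactly the coordinatewise statement.
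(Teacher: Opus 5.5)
Your argument is the same Hersch--Li--Yau degree argument the paper sketches: conformal maps parametrized by $\mathbb B^{N+1}$, the $u$-weighted center of mass, its behaviour at the boundary, and a Brouwer conclusion. Your version is more complete than the paper's. You justify the continuous extension to $\overline{\mathbb B}^{N+1}$ by dominated convergence, using that $\phi^{-1}(-a_0)$ is finite, which is exactly where the immersion hypothesis and $k\ge1$ enter. You also spell out the Brouwer step correctly.

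There is one concrete error to fix: your displayed $F_a$ does not map $\Sph^N$ to itself. Since $|y+a|^2=1+2\langle a,y\rangle+|a|^2$ on the sphere, your formula equals $a+\frac{(1-|a|^2)\,y}{|y+a|^2}$. At $y=a/|a|$ this gives $\frac{1+|a|^2}{1+|a|}\,\frac{a}{|a|}$, whose norm is $<1$. At $y=-a/|a|$ the norm is $\frac{1+|a|^2}{1-|a|}$, which is unbounded as $|a|\to1$.

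This breaks more than one step:
\begin{itemize}
\item the claim $|F_a(y)|=1$ is false;
\item the domination of the integrands by $u$ in your boundary argument also fails, since it relies on $|F_a|=1$;
\item a zero of the resulting $G$ would not come from a conformal diffeomorphism, so $\Psi$ would not be sphere-valued as \eqref{eq:coordinate-identities} requires.
\end{itemize}
The coefficient of $a$ in the numerator should be $2\bigl(1+\langle a,y\rangle\bigr)$, that is,
\[
F_a(y)=a+\frac{(1-|a|^2)(y+a)}{|y+a|^2}
=\frac{(1-|a|^2)\,y+2\bigl(1+\langle a,y\rangle\bigr)a}{1+2\langle a,y\rangle+|a|^2}.
\]
This map is translation, inversion, scaling and translation, so it is conformal. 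It satisfies $|F_a(y)|=1$, and $F_a(y)\to a_0$ for $y\neq-a_0$ as $a\to a_0\in\Sph^N$. With this formula the rest of your proof goes through as written.

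A minor point: the extra uniformity in $a_0$ you worry about is unnecessary. Sequential continuity of the extension at $a_0$ only requires $G(a_j)\to a_0$ along interior sequences, which is your dominated-convergence step. Along boundary sequences it is trivial.
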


\begin{proof}
The conformal transformations of $\Sph^N$ can be parametrized, up to
orthogonal transformations, by the open unit ball
$\mathbb B^{N+1}$. Denote the corresponding family by $F_a$, where
$a\in\mathbb B^{N+1}$, and define
\[
\mathcal C(a)
=
\frac{1}{\int_\Sigma u\,d\Sigma}
\int_\Sigma u\,F_a\circ\phi\,d\Sigma.
\]
The map
\[
\mathcal C:\mathbb B^{N+1}\longrightarrow\mathbb B^{N+1}
\]
is continuous. {Furthermore, a}s $a$ approaches the boundary, the conformal
transformation $F_a$ concentrates the image toward the boundary point
determined by $a$. The standard degree argument of Hersch and Li-Yau
then shows that $\mathcal C$ must vanish at some
$a_0\in\mathbb B^{N+1}$. Taking $F=F_{a_0}$ gives
\[
\int_\Sigma u\,F\circ\phi\,d\Sigma=0,
\]
which is the desired conclusion.
\end{proof}

The importance of Lemma~\ref{lem:Li-Yau} in our setting is that the
weight $u$ can be chosen to be a positive first eigenfunction of a
Schrödinger operator. The coordinate functions of $\Psi$ are then
orthogonal to the first eigenspace and are therefore admissible in the
variational characterization of the second eigenvalue. Moreover,
since $\Psi$ takes values in the unit sphere,
\begin{equation}\label{eq:coordinate-identities}
\sum_{\alpha=1}^{N+1}\Psi_\alpha^2=1,
\qquad
\sum_{\alpha=1}^{N+1}|\nabla\Psi_\alpha|^2
=
|\nabla\Psi|^2.
\end{equation}
These identities allow the individual Rayleigh inequalities to be
summed into a single geometric estimate.

The second ingredient controls the energy of the conformally modified
immersion. It is a consequence of the {conformal volume} estimates of El Soufi and Ilias \cite{ElSoufiIlias1986}; see also~\cite{ElSoufiIliasSurvey}.

\begin{lemma}[El Soufi-Ilias]
\label{lem:ESI}
Let
\(
\phi:\Sigma^k\longrightarrow\Sph^N
\)
be an isometric immersion of a closed manifold, and let $\vec H$ be
its mean curvature vector in $\Sph^N$, with the convention
\(
\vec H=\frac1k\operatorname{tr}{\tau},
\){ where $\tau$ is the second fundamental form of $\phi$.}
Then, for every conformal diffeomorphism
$F:\Sph^N\longrightarrow\Sph^N$,
\begin{equation}\label{eq:ESI}
 \int_\Sigma
 |\nabla(F\circ\phi)|^2\,d\Sigma
 \leq
 k\int_\Sigma
 \bigl(1+|\vec H|^2\bigr)\,d\Sigma.
\end{equation}
If $k\geq3$ and equality holds, then the energy density
\(
|\nabla(F\circ\phi)|^2
\)
is constant on $\Sigma$.
\end{lemma}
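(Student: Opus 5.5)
The plan is to reduce \eqref{eq:ESI} to an integral identity for a single scalar function on $\Sigma$. First I would normalize the conformal map. Up to an isometry of $\Sph^N$, which changes neither $|\nabla(F\circ\phi)|^2$ nor the right-hand side, every conformal diffeomorphism has the form $F=F_a$ with $a\in\mathbb B^{N+1}$. Its conformal factor is
\[
F_a^*g_{\Sph^N}=\rho_a^2\,g_{\Sph^N},\qquad \rho_a(y)=\frac{1-|a|^2}{1-\langle a,y\rangle}.
\]
Since $\phi$ is isometric, $F\circ\phi$ is conformal with factor $\rho_a\circ\phi$. Hence $|\nabla(F\circ\phi)|^2=k\,\rho_a(\phi)^2$, and the claim becomes
\[
\int_\Sigma \rho^2\,d\Sigma\le\int_\Sigma(1+|\vec H|^2)\,d\Sigma,\qquad \rho=\frac{1-|a|^2}{w},\quad w=1-\langle a,\phi\rangle>0 .
\]

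Next I would collect the pointwise identities available on $\Sigma$. Write
\[
a=a^T+a^\nu+\langle a,\phi\rangle\phi,
\]
where $a^T$ is tangent to $\Sigma$ and $a^\nu$ is normal to $\Sigma$ inside $T\Sph^N$. This gives $|\nabla w|^2=|a^T|^2$ and
\[
|a|^2=|a^T|^2+|a^\nu|^2+(1-w)^2 .
\]
The spherical Beltrami formula $\Delta\phi=-k\phi+k\vec H$ gives
\[
\Delta w=k(1-w)-k\langle a^\nu,\vec H\rangle .
\]
From these I would integrate divergences of the form $\Delta(w^{-1})$ or $\operatorname{div}(w^{-2}\nabla w)$ over the closed manifold $\Sigma$. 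The goal is to produce an identity of the shape
\[
\int_\Sigma\rho^2=\int_\Sigma\Bigl(1+|\vec H|^2-\bigl|\,\tfrac{a^\nu}{w}c_1-c_2\vec H\bigr|^2-c_3\tfrac{|a^T|^2}{w^2}\Bigr),
\]
with nonnegative coefficients $c_i$ depending on $k$ and $|a|$. Concretely, I would substitute $1-|a|^2=2w-w^2-|a^T|^2-|a^\nu|^2$ into $\rho^2$. The term linear in $\langle a^\nu,\vec H\rangle/w$ would then be traded, through the Laplacian identity, for gradient terms, and finally the square in $a^\nu$ and $\vec H$ would be completed.

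The hardest step is finding the right combination of divergence terms so that every leftover term has a definite sign. This is a genuine algebraic search, because for $k=2$ the inequality is the sharp Li--Yau/Willmore-type bound. If a direct completion of squares fails, I would fall back on El Soufi--Ilias' route. In that route one proves the pointwise estimate
\[
\rho^2\le 1+|\vec H|^2+\tfrac1k\Delta\Lambda
\]
for an explicit function $\Lambda(w)$, such as $\Lambda=\log w$ suitably scaled, using Cauchy--Schwarz in the form $\langle a^\nu,\vec H\rangle\le|a^\nu||\vec H|$. Integrating over $\Sigma$ then kills $\Delta\Lambda$.

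For the equality statement, equality in \eqref{eq:ESI} forces every discarded nonnegative term to vanish identically. In particular $c_3|a^T|^2/w^2\equiv0$. For $k\ge3$ I expect $c_3>0$; this is exactly where the dimension restriction enters, since for $k=2$ that coefficient vanishes. So $a^T\equiv0$, which means $w$ is constant on $\Sigma$. Hence $\rho$ is constant, and so is the energy density $k\rho^2$.
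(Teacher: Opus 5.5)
The paper gives no proof of this lemma; it simply quotes El Soufi--Ilias. Your direct computation is therefore a genuinely different, self-contained route, and its architecture is right. As written, however, it contains one error and leaves the decisive step undone.

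\emph{The conformal factor.} For every conformal diffeomorphism one has $\rho(y)=\sqrt{1-|a|^2}/(1-\langle a,y\rangle)$ for a suitable $a\in\mathbb B^{N+1}$, not $(1-|a|^2)/(1-\langle a,y\rangle)$. To check this, note that great circles through $\pm a/|a|$ are mapped onto great circles, so $\int\rho\,d\theta=2\pi$ along them; your formula gives $2\pi\sqrt{1-|a|^2}$. Your $\rho^2$ is $(1-|a|^2)$ times the true one, so bounding it says nothing about the energy. With the correct factor, $\rho^2=(1-|a|^2)/w^2$, and your substitution gives
\[
\rho^2=\frac2w-1-\frac{|a^T|^2+|a^\nu|^2}{w^2}.
\]
This is what makes the computation close.

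\emph{The missing identity.} The search you describe ends with $\Lambda=2\log w$. Since $\Delta\log w=\Delta w/w-|a^T|^2/w^2$, your formula for $\Delta w$ gives the exact pointwise identity
\[
\rho^2=1+|\vec H|^2-\Bigl|\frac{a^\nu}{w}-\vec H\Bigr|^2-\Bigl(1-\frac2k\Bigr)\frac{|a^T|^2}{w^2}+\frac2k\,\Delta\log w .
\]
In your ansatz this means $c_1=c_2=1$ and $c_3=1-2/k$, and no Cauchy--Schwarz step is needed. Integrating over $\Sigma$ yields \eqref{eq:ESI} for $k\ge2$.

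\emph{Equality.} Your equality argument works for $k\ge3$. The identity forces $a^T\equiv0$, which makes $w$, and hence the energy density $k(1-|a|^2)/w^2$, locally constant. It is constant on $\Sigma$ only if $\Sigma$ is connected.

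\emph{The case $k=1$.} Your expectation that the $c_i$ can be taken nonnegative fails for $k=1$, and no better combination can rescue it. Take a great circle and $a\neq0$ in its plane. Then $\int\rho\,d\theta=2\pi$ while $\rho$ is nonconstant, so Cauchy--Schwarz gives
\[
\int\rho^2\,d\theta>2\pi=\int(1+|\vec H|^2)\,d\theta .
\]
The lemma therefore genuinely requires $k\ge2$. This hypothesis is absent from the statement and from its use in Theorem~\ref{thm:main-F}.
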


\begin{remark}
For $F=\operatorname{Id}_{\Sph^N}$, the immersion $\phi$ is isometric
and hence
\(
|\nabla\phi|^2=k.
\)
Thus \eqref{eq:ESI} is immediate in this case. The content of the
El Soufi-Ilias estimate is that the energy remains controlled after
an arbitrary conformal transformation of the ambient sphere, even
though $F\circ\phi$ is generally no longer isometric. The right-hand
side is expressed solely in terms of the geometry of the original
immersion.
\end{remark}

We shall apply these lemmas as follows. Given a positive first
eigenfunction $u$ of the relevant Schrödinger operator, we choose
$F$ by Lemma~\ref{lem:Li-Yau}. Each coordinate function $\Psi_\alpha$
is then an admissible {test-function} for the second eigenvalue. After
summing the associated Rayleigh inequalities, identities
\eqref{eq:coordinate-identities} and estimate \eqref{eq:ESI} convert
the resulting analytic expression into an estimate involving the mean
curvature of the spherical immersion. This is the conformal mechanism
underlying the proofs in Section~\ref{mainresults}.

\section{Second eigenvalue estimates}\label{mainresults}

Let
\(
 \iota:\Sigma^k\longrightarrow(\F P^m,g_0)
\)
be a closed immersion, and let $\sigma$ and $\vec H_\sigma$ be its
second fundamental form and mean curvature vector, {respectively}. We consider
$$
 L=\Delta+|\sigma|^2+k
$$
and enumerate the eigenvalues of $-L$ increasingly. Thus,
\[
 \lambda_2(L)
 =\inf_{\substack{f\neq0\\ \int_\Sigma uf=0}}
 \frac{\int_\Sigma(|\nabla f|^2-(|\sigma|^2+k)f^2)}
 {\int_\Sigma f^2},
\]
where $u>0$ is a first eigenfunction.

We are now in a position to establish the main eigenvalue estimate for
closed submanifolds of real, complex, and quaternionic projective
spaces.

\begin{theorem}\label{thm:main-F}
Let
\(
\iota:\Sigma^k\longrightarrow (\F P^m,g_0)
\), 
\(
\F\in\{\R,\C,\Hh\},
\)
be a closed immersed sub-manifold, and consider the Schrödinger
operator
\(
L=\Delta+|\sigma|^2+k,
\)
where $\sigma$ is the second fundamental form of the immersion with
respect to $g_0$. Then
\begin{equation}\label{eq:main-sharp}
 \lambda_2(L)
 \leq k+2+
 \frac{2}{k\Vol(\Sigma,g_0)}
 \int_\Sigma\Theta_{\F}(T_p\Sigma)\,d\Sigma.
\end{equation}
In particular, denoting $d=\dim_{\R}\F$, then
\begin{equation}\label{eq:main-coarse}
 \lambda_2(L)\leq k+2d.
\end{equation}
Equality in \eqref{eq:main-sharp} implies that $\Sigma$ is totally
umbilical in $\F P^m$. Moreover, in the real-projective case, equality
\[
\lambda_2(L)=k+2
\]
forces the image of $\Sigma$ to be totally geodesic.
\end{theorem}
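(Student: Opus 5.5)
The plan is to run the Li–Yau/El Soufi–Ilias argument on the composite immersion $\phi=\Phi_{\F}\circ\iota:\Sigma\to\Sph^{N}$, and then convert the resulting spherical mean curvature into data on $\F P^m$ using Lemma~\ref{lem:trace-B}. All Rayleigh quotients are taken with respect to the metric on $\Sigma$ induced by $g_0$. The metric induced by $\phi$ is then $s_m g_0$ with $s_m=2(m+1)/m$.

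\emph{Step 1 (test functions).} Let $u>0$ be a first eigenfunction of $-L$. By Lemma~\ref{lem:Li-Yau} there is a conformal map $F$ of $\Sph^N$ such that $\Psi=F\circ\phi$ satisfies $\int_\Sigma u\Psi_\alpha=0$ for every $\alpha$. Each $\Psi_\alpha$ is therefore admissible in the variational characterization of $\lambda_2(L)$. Summing the Rayleigh inequalities over $\alpha$ and using \eqref{eq:coordinate-identities} gives
\[
\lambda_2(L)\Vol(\Sigma,g_0)\le\int_\Sigma|\nabla_{g_0}\Psi|^2\,d\Sigma_{g_0}-\int_\Sigma(|\sigma|^2+k)\,d\Sigma_{g_0}.
\]

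\emph{Step 2 (energy).} Set $g_\Phi=s_mg_0$ on $\Sigma$. Under this change of metric, $|\nabla_{g_0}\Psi|^2=s_m|\nabla_{g_\Phi}\Psi|^2$ and $d\Sigma_{g_0}=s_m^{-k/2}d\Sigma_{g_\Phi}$. Applying Lemma~\ref{lem:ESI} to the isometric immersion $\phi:(\Sigma,g_\Phi)\to\Sph^N$ and converting back yields
\[
\int_\Sigma|\nabla_{g_0}\Psi|^2\,d\Sigma_{g_0}\le s_mk\int_\Sigma(1+|\vec H_\phi|^2)\,d\Sigma_{g_0}.
\]

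\emph{Step 3 (mean curvature).} With respect to a $g_\Phi$-orthonormal frame of $T_p\Sigma$, the second fundamental form of $\phi$ splits as $\tau=B|_{T\Sigma}+d\Phi(\sigma^{g_\Phi})$. The two pieces are orthogonal, since $B$ is normal to $\Phi(\F P^m)$ while $d\Phi(\cdot)$ is tangent to it. Moreover, $\sigma$ does not change under homothety, while the trace rescales by $1/s_m$. Hence
\[
|\vec H_\phi|^2=\frac1{k^2}\Bigl|\operatorname{tr}_{T_p\Sigma}B\Bigr|^2+\frac{|\vec H_\sigma|^2_{g_0}}{s_m}.
\]
Inserting Lemma~\ref{lem:trace-B} gives
\[
s_mk\bigl(1+|\vec H_\phi|^2\bigr)=2k+2+\frac{2}{k}\Theta_{\F}(T_p\Sigma)+k|\vec H_\sigma|^2 .
\]
The arithmetic behind this identity is $s_mk+\frac{2(m-k)}{m}=2k+2$, and the $\Theta$-term has coefficient $\frac{s_m m}{k(m+1)}=\frac2k$. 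Combining with Steps 1 and 2 gives
\[
\lambda_2(L)\Vol(\Sigma,g_0)\le\int_\Sigma\Bigl(k+2+\frac2k\Theta_{\F}\Bigr)\,d\Sigma-\int_\Sigma\bigl(|\sigma|^2-k|\vec H_\sigma|^2\bigr)\,d\Sigma.
\]
Since $|\sigma|^2\ge k|\vec H_\sigma|^2$ pointwise, with equality exactly at umbilical points, this proves \eqref{eq:main-sharp}. Equality forces $|\sigma|^2=k|\vec H_\sigma|^2$ everywhere, i.e.\ $\Sigma$ is totally umbilical. The coarse bound \eqref{eq:main-coarse} follows from \eqref{eq:Theta-bound}, since $\frac2k\Theta_{\F}\le 2(d-1)$.

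\emph{Step 4 (real equality case), the expected main obstacle.} This is the only place where the inequality alone does not suffice. If $\lambda_2(L)=k+2$ in $\R P^m$, then $\Sigma$ is totally umbilical. Lifting to $\Sph^m$, its image is then a projected totally geodesic $\Sph^k$ (giving $\R P^k$), or the diffeomorphic projection of a small $k$-sphere of radius $r$ lying in some $\Sph^{k+1}$. The plan is to compute $\lambda_2$ in each case and exclude the second.
\begin{itemize}
\item In the small-sphere case the potential $|\sigma|^2=k\cot^2r$ is constant, so the eigenfunctions of $-L$ are those of $-\Delta$. Hence $\lambda_2=\frac{k}{\sin^2 r}-k\cot^2r-k=0<k+2$, which excludes this case.
\item For $\R P^k$ with $\sigma=0$, the first nonzero Laplace eigenvalue comes from even functions on $\Sph^k$ and equals $2(k+1)$. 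Thus $\lambda_2=2k+2-k=k+2$, consistent with equality.
\end{itemize}
Hence equality forces the image to be totally geodesic. The delicate point I would check most carefully is the classification of umbilical submanifolds of $\R P^m$ in arbitrary codimension: they should lie in a totally geodesic $\R P^{k+1}$ and have constant mean curvature. This holds for $k\ge2$ by the standard Codazzi argument. The case $k=1$ (curves) needs a separate remark, because there umbilicity is vacuous.
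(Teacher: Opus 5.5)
This is essentially the paper's proof: Li--Yau with a first eigenfunction as weight, Lemma~\ref{lem:ESI} applied to $\Phi_{\F}\circ\iota$, the orthogonal splitting of $\vec H_\phi$ combined with Lemma~\ref{lem:trace-B}, the inequality $|\sigma|^2\geq k|\vec H_\sigma|^2$, and, in $\R P^m$, the umbilical classification together with the geodesic-sphere computation. The only difference is that you keep the Rayleigh quotient in $g_0$ and rescale the energy, whereas the paper passes to $\widetilde L=L/s_m$; your arithmetic agrees with \eqref{eq:pointwise-simplification}.

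Your $k=1$ caveat is legitimate, the paper does not address it either, and it already breaks Step 2 because Lemma~\ref{lem:ESI} fails for curves. Composing a great circle with the conformal map acting on it as $e^{i\theta}\mapsto\frac{e^{i\theta}+t}{1+te^{i\theta}}$ gives energy $\int_0^{2\pi}\bigl(\frac{1-t^2}{1+2t\cos\theta+t^2}\bigr)^2d\theta=2\pi\frac{1+t^2}{1-t^2}>2\pi$, so as written both your argument and the paper's prove the theorem only for $k\geq2$.
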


\begin{proof}
We first pass to the normalization induced by the standard projective
embedding. Recall that
\(
\Phi_{\F}:(\F P^m,g_{\Phi})\longrightarrow \Sph^N
\)
is an isometric immersion, where
\(
g_{\Phi}=s_mg_0
\), \(
s_m=\frac{2(m+1)}{m}.
\)
Denote by $\widetilde\Delta$, $\widetilde\sigma$, and
$d\widetilde\Sigma$ the Laplacian, the second fundamental form, and the
volume element associated with the induced metric $g_{\Phi}$ on
$\Sigma$. Since $g_{\Phi}=s_mg_0$ with $s_m$ constant, we have
\[
\widetilde\Delta=\frac1{s_m}\Delta,
\qquad
|\widetilde\sigma|^2=\frac1{s_m}|\sigma|^2.
\]
Consequently,
\[
\widetilde L
=
\widetilde\Delta+|\widetilde\sigma|^2+\frac{k}{s_m}
=
\frac1{s_m}L,
\]
and therefore
\begin{equation}\label{eq:eigen-scaling}
 \lambda_2(L)=s_m\lambda_2(\widetilde L).
\end{equation}

Consider now the composed immersion
\(
\phi=\Phi_{\F}\circ\iota:
(\Sigma,g_{\Phi})\longrightarrow \Sph^N.
\)
Let $\tau$ and $\vec H_\tau$ denote its second fundamental form and
mean curvature vector, respectively. We use the convention
\(
\vec H_\tau=\frac1k\sum_{i=1}^k\tau(e_i,e_i),
\)
where $\{e_1,\ldots,e_k\}$ is a local $g_{\Phi}$-orthonormal frame on
$\Sigma$.

Let $B$ denote the second fundamental form of
\(
\Phi_{\F}:(\F P^m,g_{\Phi})\longrightarrow\Sph^N.
\)
The composition formula for second fundamental forms gives
\begin{equation}\label{eq:composition-B}
 \tau(X,Y)=B(X,Y)+\widetilde\sigma(X,Y).
\end{equation}
Here $B(X,Y)$ is normal to $\F P^m$ in $\Sph^N$, whereas
$\widetilde\sigma(X,Y)$ is tangent to $\F P^m$ and normal to $\Sigma$.
Thus these two terms belong to mutually orthogonal normal subbundles.
Taking the trace of \eqref{eq:composition-B}, we obtain
\[
k\vec H_\tau
=
\sum_{i=1}^kB(e_i,e_i)
+
k\vec H_{\widetilde\sigma},
\]
and orthogonality yields
\begin{equation}\label{eq:composition-H}
 k|\vec H_\tau|^2
 =
 \frac1k\left|\sum_{i=1}^kB(e_i,e_i)\right|^2
 +
 k|\vec H_{\widetilde\sigma}|^2.
\end{equation}

Let $u>0$ be a first eigenfunction of $\widetilde L$. By
Lemma~\ref{lem:Li-Yau}, there exists a conformal diffeomorphism
$\Gamma$ of $\Sph^N$ such that the coordinate functions of
$\Gamma\circ\phi$ are orthogonal to~$u$. Hence they are admissible {test-functions} in the variational characterization of
$\lambda_2(\widetilde L)$. Summing the corresponding Rayleigh
inequalities and applying Lemma~\ref{lem:ESI}, we find
\begin{align}
 \lambda_2(\widetilde L)\Vol(\Sigma,g_{\Phi})
 &\leq
 k\int_\Sigma\bigl(1+|\vec H_\tau|^2\bigr)
 \,d\widetilde\Sigma
 -
 \int_\Sigma
 \left(
 |\widetilde\sigma|^2+\frac{k}{s_m}
 \right)d\widetilde\Sigma.
 \label{eq:rayleigh-composed}
\end{align}

Using \eqref{eq:composition-H}, the integrand on the right-hand side {in \eqref{eq:rayleigh-composed}}
can be written as
\begin{align*}
 &k+\frac1k
 \left|\sum_{i=1}^kB(e_i,e_i)\right|^2
 +k|\vec H_{\widetilde\sigma}|^2
 -|\widetilde\sigma|^2-\frac{k}{s_m}.
\end{align*}
The standard trace inequality for the second fundamental form gives
\(
|\widetilde\sigma|^2
\geq k|\vec H_{\widetilde\sigma}|^2.
\)
Moreover, equality holds at a point if and only if
\(
\widetilde\sigma(X,Y)
=
g_{\Phi}(X,Y)\vec H_{\widetilde\sigma},
\)
that is, if and only if the immersion is umbilical at that point.
Therefore, \eqref{eq:rayleigh-composed} implies
\begin{equation}\label{eq:master-estimate}
 \lambda_2(\widetilde L)\Vol(\Sigma,g_{\Phi})
 \leq
 \int_\Sigma
 \left[
 k+\frac1k
 \left|\sum_{i=1}^kB(e_i,e_i)\right|^2
 -\frac{k}{s_m}
 \right]d\widetilde\Sigma.
\end{equation}

By Lemma~\ref{lem:trace-B}, the trace of $B$ over the tangent plane
$T_p\Sigma$ satisfies
\[
\frac1k
\left|\sum_{i=1}^kB(e_i,e_i)\right|^2
=
\frac{k(m-k)+m\Theta_{\F}(T_p\Sigma)}
     {k(m+1)}.
\]
Since $s_m=2(m+1)/m$, a direct simplification gives
\begin{align}
 k+\frac{k(m-k)+m\Theta_{\F}(T_p\Sigma)}
          {k(m+1)}
 -\frac{k}{s_m}
 &=
 \frac{k+2}{s_m}
 +
 \frac{m}{k(m+1)}
 \Theta_{\F}(T_p\Sigma).
 \label{eq:pointwise-simplification}
\end{align}
Substituting \eqref{eq:pointwise-simplification} into
\eqref{eq:master-estimate}, dividing by
$\Vol(\Sigma,g_{\Phi})$, and using
\(
\frac{m}{m+1}=\frac{2}{s_m},
\)
we obtain
\[
\lambda_2(\widetilde L)
\leq
\frac{k+2}{s_m}
+
\frac{2}{s_mk\Vol(\Sigma,g_{\Phi})}
\int_\Sigma\Theta_{\F}(T_p\Sigma)\,d\widetilde\Sigma.
\]

Because $g_{\Phi}=s_mg_0$, both the volume and the integral are
multiplied by the same constant factor $s_m^{k/2}$. Hence
\[
\frac{1}{\Vol(\Sigma,g_{\Phi})}
\int_\Sigma\Theta_{\F}(T_p\Sigma)\,d\widetilde\Sigma
=
\frac{1}{\Vol(\Sigma,g_0)}
\int_\Sigma\Theta_{\F}(T_p\Sigma)\,d\Sigma.
\]
Multiplying the preceding inequality by $s_m$ and invoking
\eqref{eq:eigen-scaling}, we arrive at
\[
\lambda_2(L)
\leq k+2+
\frac{2}{k\Vol(\Sigma,g_0)}
\int_\Sigma\Theta_{\F}(T_p\Sigma)\,d\Sigma,
\]
which proves \eqref{eq:main-sharp}. By \eqref{eq:Theta-bound},
\(
\Theta_{\F}(T_p\Sigma)\leq{(d-1)k}
\)
at every point. Therefore
\[
\frac{2}{k\Vol(\Sigma,g_0)}
\int_\Sigma\Theta_{\F}(T_p\Sigma)\,d\Sigma
\leq 2(d-1),
\]
and \eqref{eq:main-coarse} follows:
\[
\lambda_2(L)\leq k+2d.
\]

Finally, suppose that equality holds in \eqref{eq:main-sharp}. Since
the only geometric inequality used after the conformal test-function
argument is
\[
|\widetilde\sigma|^2
-
k|\vec H_{\widetilde\sigma}|^2\geq0,
\]
equality in the final estimate forces
\[
|\widetilde\sigma|^2
=
k|\vec H_{\widetilde\sigma}|^2
\]
everywhere on $\Sigma$. Thus the trace-free second fundamental form
vanishes identically, and $\Sigma$ is totally umbilical in
$\F P^m$.

In the real-projective case, $\Theta_{\R}\equiv0$, so
\eqref{eq:main-sharp} becomes
\[
\lambda_2(L)\leq k+2.
\]
If equality holds, the preceding argument shows that $\Sigma$ is
totally umbilical. The classification of closed totally umbilical
submanifolds of real projective space reduces the problem to a
geodesic $k$-sphere. Lemma~\ref{lem:geodesic-sphere} {below} shows that a
non-totally-geodesic sphere has $\lambda_2(L)=0$. Hence equality
$\lambda_2(L)=k+2$ can occur only in the totally geodesic case.
\end{proof}

\begin{lemma}\label{lem:geodesic-sphere}
Let $S^k(r)$ be a geodesic $k$-sphere of radius
$r\in(0,\pi/2]$ in $(\R P^m,g_0)$. For
\[
L=\Delta+|\sigma|^2+k,
\]
one has
\[
 \lambda_2(L)=
 \begin{cases}
  0,&0<r<\pi/2,\\[1mm]
  k+2,&r=\pi/2.
 \end{cases}
\]
For $r=\pi/2$, {$S^k(r)$} is a totally geodesic copy of $\R P^k$ {in $(\R P^m,g_0)$}.
\end{lemma}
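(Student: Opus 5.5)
The plan is to compute the spectrum of $L$ directly. A geodesic sphere is totally umbilical, so $|\sigma|^2$ is constant and $L$ is the Laplacian shifted by a constant. Its eigenvalues are then read off from the spectrum of the round sphere, or, when $r=\pi/2$, of $\R P^k$.

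First I set up the model. I lift to the universal cover $\Sph^m\to\R P^m$, which is a local isometry for $g_0$ (curvature $1$). A geodesic $k$-sphere of radius $r$ lifts to a small sphere of radius $r$ inside a totally geodesic $\Sph^{k+1}\subset\Sph^m$, namely the set of points at distance $r$ from a point $o$ within that $\Sph^{k+1}$.

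\emph{Case $0<r<\pi/2$.} The sphere is disjoint from its antipodal image, since the antipodal copy lies at distance $\pi-r>r$ from $o$. Hence it projects diffeomorphically into $\R P^m$, and $S^k(r)$ is intrinsically a round sphere of radius $\sin r$. Because $\Sph^{k+1}$ is totally geodesic, $\sigma$ is the second fundamental form of $S^k(r)$ in $\Sph^{k+1}$. This is umbilical with principal curvature $\cot r$, so $|\sigma|^2=k\cot^2 r$. Consequently
\[
|\sigma|^2+k=k\cot^2r+k=\frac{k}{\sin^2 r}.
\]
The Laplace eigenvalues of the round sphere of radius $\sin r$ are $j(j+k-1)/\sin^2 r$ for $j\ge 0$. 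The eigenvalues of $-L$ are therefore $\bigl(j(j+k-1)-k\bigr)/\sin^2 r$. For $j=0$ this gives the simple eigenvalue $-k/\sin^2 r$ (constant eigenfunction), and for $j=1$ it gives $0$. Hence $\lambda_2(L)=0$.

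\emph{Case $r=\pi/2$.} The lifted sphere is the equatorial great sphere $\Sph^k\subset\Sph^{k+1}$, which is invariant under the antipodal map. Its image is therefore a totally geodesic $\R P^k$ with $\sigma=0$, and $-L=-\Delta-k$. Eigenfunctions on $\R P^k$ are the even spherical harmonics on $\Sph^k$, so the Laplace spectrum is $j(j+k-1)$ with $j$ even, namely $0,\ 2(k+1),\dots$. The eigenvalues of $-L$ are $-k$ (simple) and then $2k+2-k=k+2$, so $\lambda_2(L)=k+2$. This also proves the final assertion of the lemma.

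The main point needing care is the topological dichotomy. I must confirm that for $r<\pi/2$ the projection to $\R P^m$ is injective, so the intrinsic manifold is $\Sph^k$ and the odd harmonics with $j=1$ survive. For $r=\pi/2$ the quotient discards these odd harmonics, which is exactly what moves $\lambda_2$ from $0$ up to $k+2$. Finally, since the first eigenvalue is simple in both cases (the constant eigenfunction), the variational characterization of $\lambda_2$ used in the paper agrees with the second value in the lists above.
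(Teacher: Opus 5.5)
Your proposal is correct and follows essentially the same route as the paper. The potential $|\sigma|^2+k$ is constant, so $\lambda_2(L)$ is the first nonzero Laplace eigenvalue minus that constant. This gives $k/\sin^2 r-k\cot^2 r-k=0$ on the round sphere of radius $\sin r$, and $2(k+1)-k=k+2$ on the totally geodesic $\R P^k$ via the even harmonics. Your lift to $\Sph^m$ also justifies two points the paper takes for granted: the sphere is embedded when $r<\pi/2$, and the first eigenvalue is simple.
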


\begin{proof}
Suppose first that $0<r<\pi/2$. The metric induced on $S^k(r)$ is the
round metric of radius $\sin r$. Its first nonzero Laplace eigenvalue is
therefore
\[
\mu_1({\Delta})=\frac{k}{\sin^2r}.
\]
The sphere is totally umbilical, with each principal curvature equal
to $\cot r$, and hence
\(
|\sigma|^2=k\cot^2r.
\)
Since the potential $|\sigma|^2+k$ is constant, the eigenvalues of
$L$ in the convention
\[
Lu+\lambda u=0
\]
are obtained by subtracting this potential from the eigenvalues of
${\Delta}$. The first eigenfunction of $L$ is constant, while the
second eigenvalue is
\begin{align*}
\lambda_2(L)
&=
\frac{k}{\sin^2r}-k\cot^2r-k=
k\left(
\frac1{\sin^2r}-\frac{\cos^2r}{\sin^2r}-1
\right)
=0.
\end{align*}

When $r=\pi/2$, the limiting geodesic sphere is a totally geodesic
copy of $\R P^k$, and thus $\sigma=0$. The spectrum of the unit
$\R P^k$ is obtained from the spectrum of $\Sph^k$ by retaining only
the antipodally even spherical harmonics. The degree-one harmonics do
not descend to $\R P^k$, while the first nonconstant descending
harmonics have degree two. Their eigenvalue is
\[
2(2+k-1)=2(k+1).
\]
Therefore
\[
\lambda_2(L)=2(k+1)-k=k+2,
\]
as claimed.
\end{proof}

\begin{remark}
With our normalization, the projective spaces are Einstein, with
\[
\begin{array}{c|c}
\text{projective space} & \operatorname{Ric}_{g_0} \\ \hline
\mathbb CP^m & 2(m+1)g_0 \\[1mm]
\mathbb HP^m & 4(m+2)g_0 \\[1mm]
\mathbb OP^2 & 36g_0
\end{array}
\]
Consequently, the Jacobi operator
\(
J=\Delta+|\sigma|^2+\operatorname{Ric}_{g_0}(N,N)
\)
differs from the operator considered here only by a constant. Therefore,
the results of this paper also yield corresponding estimates for the
second eigenvalue of \(J\). We do not pursue these consequences here.
\end{remark}

The real projective case takes a particularly elegant and remarkably clean form, for which we also provide a direct proof.

\begin{corollary}\label{cor:real-projective}
Let
\(
\iota:\Sigma^k\longrightarrow(\R P^m,g_0)
\)
be a closed immersion, where $g_0$ has constant sectional curvature
one, and consider
\(
L=\Delta+|\sigma|^2+k.
\)
Then
\[
\lambda_2(L)\leq k+2.
\]
Moreover, equality holds if and only if \(\iota\) is an embedding
onto a totally geodesic copy of \(\mathbb RP^k\) in
\(\mathbb RP^m\). Hence,
\[
(\Sigma,\iota^*g_0)\cong(\mathbb RP^k,g_0).
\]
\end{corollary}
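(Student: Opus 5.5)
The plan is to give a self-contained argument that avoids the projective machinery beyond the Veronese-type embedding, and then to handle the equality case separately.

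\emph{The inequality.} I will pull back to the sphere. Let $\pi:\Sph^m\to\R P^m$ be the two-fold Riemannian covering. Then $\iota$ lifts to an immersion $\hat\iota:\hat\Sigma\to\Sph^m$ of a closed (possibly disconnected) double cover $\hat\Sigma$. One route is to use Theorem~\ref{thm:main-F} directly with $\Theta_\R\equiv0$, which already gives $\lambda_2(L)\le k+2$. The direct proof I have in mind runs the same mechanism but with explicit coordinates:
\begin{enumerate}
\item Take the embedding $\Phi_\R([x])=c_m(xx^t-\frac1nI_n)$. Its composition with $\iota$ is an isometric immersion of $(\Sigma,s_m g_0)$ into $\Sph^N$.
\item Choose the conformal map by Lemma~\ref{lem:Li-Yau}, weighted by a positive first eigenfunction $u$.
\item Sum the Rayleigh quotients of the resulting coordinates and bound the energy by Lemma~\ref{lem:ESI}.
\item Split $|\vec H_\tau|^2$ via \eqref{eq:composition-H}, and use Lemma~\ref{lem:trace-B} with $\Theta=0$, which gives $|\operatorname{tr}B|^2=k(m-k)/(m+1)$.
\item Discard $|\widetilde\sigma|^2-k|\vec H_{\widetilde\sigma}|^2\ge0$.
\end{enumerate}
The arithmetic is the identity \eqref{eq:pointwise-simplification} with $\Theta=0$. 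After rescaling by $s_m$ it yields exactly $k+2$.

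\emph{Equality, forward direction.} If $\lambda_2(L)=k+2$, every inequality above must be an equality. In particular the trace-free part of $\sigma$ vanishes, so $\Sigma$ is totally umbilical in $\R P^m$. Lifting to $\Sph^m$, $\hat\iota$ is totally umbilical, so each component of $\hat\Sigma$ lies in a round $k$-sphere $\Sph^{m}\cap(\text{affine }(k+1)\text{-plane})$, i.e.\ a geodesic sphere of radius $r\in(0,\pi/2]$ in a totally geodesic $\Sph^{k+1}$.

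I still have to show that a closed immersed umbilical $\Sigma$ covers that sphere bijectively. The image is a compact open subset of the connected sphere (open since $\dim$ agrees), hence all of it, and $\iota$ is a covering of its image. Then:
\begin{itemize}
\item If $r<\pi/2$, the small sphere does not meet its antipodal image, so it projects diffeomorphically into $\R P^m$. Since $|\sigma|^2+k$ is constant, $\lambda_1(L)=-(|\sigma|^2+k)$ with constant eigenfunction, and $\lambda_2(L)$ is the first nonzero Laplace eigenvalue of $\Sigma$ minus $|\sigma|^2+k$. For a finite cover of the round sphere of radius $\sin r$, the first nonzero Laplace eigenvalue is at most $k/\sin^2r$, because the sphere's coordinate eigenfunctions pull back. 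Hence $\lambda_2\le0<k+2$ as in Lemma~\ref{lem:geodesic-sphere}, a contradiction.
\item Therefore $r=\pi/2$. The image is a totally geodesic $\R P^k$, and $\iota$ is a covering of $\R P^k$ by a closed $\Sigma$.
\end{itemize}
Since $\pi_1(\R P^k)=\mathbb Z_2$ for $k\ge2$, the covering is either an isometry or the sphere $\Sph^k$. In the sphere case, again $\sigma=0$ and $\lambda_2=k-k=0$, which is excluded. (For $k=1$ a circle covering $\R P^1$ $j$ times has $\lambda_2=1/j^2\cdot 4-1$; equality $3$ forces $j=1$.) Hence $\iota$ is an embedding onto $\R P^k$.

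\emph{Equality, converse.} The converse is Lemma~\ref{lem:geodesic-sphere} with $r=\pi/2$.

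The main obstacle is the covering and multiplicity analysis in the equality case, namely excluding nontrivial coverings and nonembedded umbilical images. I expect comparing the first Laplace eigenvalue of coverings with $k+2$ to settle it.
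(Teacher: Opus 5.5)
Your argument follows the paper's proof step for step. The inequality is Theorem~\ref{thm:main-F} with $\Theta_\R\equiv0$. For equality you use umbilicity, the extrinsic-sphere classification, Lemma~\ref{lem:geodesic-sphere} to rule out small spheres, and a spectral comparison to rule out nontrivial covers of $\R P^k$. The converse comes from $\mu_1(\R P^k)=2(k+1)$. Your lift to $\Sph^m$, and the remark that finite covers of a small sphere still have $\lambda_2\le0$, make explicit two points the paper passes over.

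\textbf{Gap at $k=1$ (also present in the paper's argument).} For a curve, $\sigma(e,e)=\vec H$, so $|\widetilde\sigma|^2=k|\vec H_{\widetilde\sigma}|^2$ holds identically. Every curve is therefore ``totally umbilical,'' and your sentence ``each component of $\hat\Sigma$ lies in a round $k$-sphere'' is false for $k=1$: the extrinsic-sphere classification needs Codazzi with $k\ge2$. Nothing you have proved forces the image of a curve to be a projective line, yet your $j$-fold covering count presupposes exactly that.

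\textbf{How to close it.} Use a different equality in the chain, namely equality in the summed Rayleigh inequalities.
\begin{enumerate}
\item Each coordinate of $\Psi=\Gamma\circ\phi$ must be a $\lambda_2(\widetilde L)$-eigenfunction. On a circle this eigenspace has dimension at most two, since it consists of solutions of a second-order ODE.
\item Hence $\Psi(\Sigma)$ lies in a great circle; it cannot be constant because $\Psi$ is an immersion. Since $\Gamma^{-1}$ is M\"obius, $\phi(\Sigma)$ lies in a round circle of $\Sph^N$.
\item Let the curve be unit-speed with tangent $T$ in $M=\Phi_\R(\R P^m)$ and be a circle of $\Sph^N$. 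Then $\nabla^{\Sph^N}_T\nabla^{\Sph^N}_TT$ is a multiple of $T$.
\item Expand $\nabla^{\Sph^N}_TT=\nabla^M_TT+B(T,T)$ and use that $B$ is parallel. The component normal to $M$ is $3B(T,\nabla^M_TT)$, so $B(T,\nabla^M_TT)=0$.
\item By \eqref{eq:B}, $B(u,v)=c_m(uv^t+vu^t)\neq0$ for nonzero orthogonal $u,v\in x^\perp$. Since $\nabla^M_TT\perp T$, this gives $\nabla^M_TT=0$.
\end{enumerate}
So the curve is a closed geodesic, i.e.\ it covers a projective line, and your degree count $\lambda_2=4/j^2-1$ finishes the argument. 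Alternatively, restrict the equality statement to $k\ge2$.
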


\begin{proof}
For $\F=\R$, the tangent-plane correction term vanishes identically, that is,
\(
\Theta_{\R}(T_p\Sigma)=0.
\)
The estimate therefore follows immediately from
Theorem~\ref{thm:main-F}. If equality holds, the same theorem implies
that $\Sigma$ is totally umbilical. The classification of totally
umbilical submanifolds of real projective space, together with
Lemma~\ref{lem:geodesic-sphere}, excludes the non-totally-geodesic
extrinsic spheres. Hence the image of $\Sigma$ is totally geodesic.

It remains to exclude a nontrivial covering of the totally geodesic
copy of \(\mathbb RP^k\). Since \(\Sigma\) is closed and connected,
the induced map
\(
\iota\colon\Sigma\longrightarrow\mathbb RP^k
\)
is a Riemannian covering. If \(k\geq2\), the only nontrivial connected
cover is the antipodal covering
\(
S^k\longrightarrow\mathbb RP^k.
\)
In that case, since \(L=\Delta+k\) and
\(\lambda_1({\Delta_{S^k}})=k\), one has
\(
\lambda_2(L)=0,
\)
contrary to the equality value \(k+2\). If \(k=1\), a covering of
degree \(q\) gives
\(
\lambda_2(L)=\frac{4}{q^2}-1,
\)
so equality forces \(q=1\). Hence the covering is trivial and
\(\iota\) is an embedding onto a totally geodesic copy of
\(\mathbb RP^k\).

The converse for embedded submanifolds follows from
\(
\lambda_2({\Delta_{\R P^k}})=2(k+1)
\)
and $\sigma=0$.
\end{proof}

The same method applies to the Cayley projective plane, although the
geometry of its standard spherical embedding produces a different
tangent-plane correction term.

\begin{theorem}\label{thm:Cayley}
Let
\(
\iota:\Sigma^k\longrightarrow(\Oo P^2,g_0)
\)
be a closed immersed submanifold, and let
\(
L=\Delta+|\sigma|^2+k.
\)
Then
$$
 \lambda_2(L)
 \leq
 2k+
 \frac{3}{k\Vol(\Sigma,g_0)}
 \int_\Sigma
 \mathcal A_{\Oo}(T_p\Sigma)\,d\Sigma.
$$
Equality implies that $\Sigma$ is totally umbilical in $\Oo P^2$.
\end{theorem}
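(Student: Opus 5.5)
The plan is to repeat the argument of Theorem~\ref{thm:main-F} verbatim, replacing $\Phi_{\F}$ by the Cayley embedding $\Phi_{\Oo}:(\Oo P^2,3g_0)\to\Sph^{25}$. The only changes are the scaling constant, which is now $s=3$ instead of $s_m$, and the pointwise input: we no longer use Lemma~\ref{lem:trace-B}, and we simply keep $\mathcal A_{\Oo}(T_p\Sigma)$ as it stands.

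First I will rescale. Set $g_\Phi=3g_0$ on $\Sigma$, so that $\widetilde\Delta=\frac13\Delta$, $|\widetilde\sigma|^2=\frac13|\sigma|^2$, and
\[
\widetilde L=\widetilde\Delta+|\widetilde\sigma|^2+\tfrac k3=\tfrac13 L,
\]
hence $\lambda_2(L)=3\lambda_2(\widetilde L)$. Next I will form the composition $\phi=\Phi_{\Oo}\circ\iota$ and use the composition formula $\tau=B+\widetilde\sigma$. The two summands take values in orthogonal normal subbundles, which gives
\[
k|\vec H_\tau|^2=\tfrac1k\mathcal A_{\Oo}(T_p\Sigma)+k|\vec H_{\widetilde\sigma}|^2 .
\]
Here $\mathcal A_{\Oo}$ is computed in a $3g_0$-orthonormal frame, which matches the frame in which $\tau$ is traced, so the definition in the Cayley subsection applies directly.

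Then I will take a positive first eigenfunction $u$ of $\widetilde L$ and use Lemma~\ref{lem:Li-Yau} to find a conformal map $\Gamma$ with $\int u\,\Gamma\circ\phi=0$. The coordinates of $\Gamma\circ\phi$ are then admissible test functions. Summing the Rayleigh quotients with \eqref{eq:coordinate-identities} and applying Lemma~\ref{lem:ESI} yields
\[
\lambda_2(\widetilde L)\Vol(\Sigma,g_\Phi)\le\int_\Sigma\Bigl(k+\tfrac1k\mathcal A_{\Oo}+k|\vec H_{\widetilde\sigma}|^2-|\widetilde\sigma|^2-\tfrac k3\Bigr)d\widetilde\Sigma .
\]
The inequality $|\widetilde\sigma|^2\ge k|\vec H_{\widetilde\sigma}|^2$ drops the second-fundamental-form terms, leaving the integrand $\frac{2k}{3}+\frac1k\mathcal A_{\Oo}$. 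Multiplying by $3$ and noting that the volume ratio is scale invariant, I obtain
\[
\lambda_2(L)\le 2k+\frac{3}{k\Vol(\Sigma,g_0)}\int_\Sigma\mathcal A_{\Oo}(T_p\Sigma)\,d\Sigma .
\]
The quantity $\mathcal A_{\Oo}$ is the same at either scale, since it is defined with the $3g_0$-frame and is a pointwise function of the tangent plane.

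For the equality case, the only inequality used apart from the conformal test-function step is $|\widetilde\sigma|^2\ge k|\vec H_{\widetilde\sigma}|^2$. Equality in the final estimate therefore forces pointwise equality there, so the traceless part of $\widetilde\sigma$ vanishes and $\Sigma$ is totally umbilical. Strictly speaking, equality in the theorem also requires equality in the Rayleigh and El Soufi--Ilias steps. But the inequality chain runs from $\lambda_2$ up to the bound, so equality of the endpoints forces every intermediate inequality to be an equality, and in particular the umbilicity one.

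I do not expect a serious obstacle. The one point needing care is that the composition formula and the orthogonality of $B$ and $\widetilde\sigma$ hold in the nonassociative setting. This is fine because they are purely Riemannian facts about the isometric embedding $\Phi_{\Oo}$ into $\Sph^{25}$ and require no octonionic algebra. I also need to track the constant $s=3$ correctly: since $\frac{k}{s}=\frac k3$, we get $3\bigl(k-\frac k3\bigr)=2k$ and $3\cdot\frac1k=\frac3k$.
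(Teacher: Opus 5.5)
Your proposal is correct and follows the paper's proof essentially step for step. You rescale to $\widetilde g=3g_0$ so that $\lambda_2(L)=3\lambda_2(\widetilde L)$, split $\tau=B+\widetilde\sigma$ orthogonally, run the Li--Yau/El Soufi--Ilias conformal test-function argument, discard $|\widetilde\sigma|^2-k|\vec H_{\widetilde\sigma}|^2\geq0$ to reach the integrand $\frac{2k}{3}+\frac1k\mathcal A_{\Oo}$, and read off total umbilicity from the fact that equality at the two ends of the inequality chain forces pointwise equality in the trace inequality, which is the same reasoning the paper uses for its equality statement.
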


\begin{proof}
Consider the standard isometric immersion
\(
\Phi_{\Oo}:(\Oo P^2,3g_0)\longrightarrow\Sph^{25}
\)
and the composition
\(
\phi=\Phi_{\Oo}\circ\iota:\Sigma\longrightarrow\Sph^{25}.
\)
With respect to the homothetic metric $\widetilde g=3g_0$, define
\[
\widetilde L
=
\widetilde\Delta+|\widetilde\sigma|^2+\frac{k}{3}.
\]
The constant scaling gives
\[
\widetilde L=\frac13L,
\qquad
\lambda_2(L)=3\lambda_2(\widetilde L).
\]

Repeating the conformal test-function argument used in the proof of
Theorem~\ref{thm:main-F}, we obtain
\[
\lambda_2(\widetilde L)\Vol(\Sigma,{\widetilde g})
\leq
\int_\Sigma
\left[
k+\frac1k
\left|\sum_{i=1}^kB(e_i,e_i)\right|^2
-\frac{k}{3}
\right]d{\widetilde\Sigma}.
\]
The trace formula for the Cayley projective embedding transforms the
integrand into
\[
\frac{2k}{3}+\frac1k\mathcal A_{\Oo}(T_p\Sigma).
\]
It follows that
\[
 \lambda_2(\widetilde L)
 \leq
 \frac{2k}{3}
 +
 \frac{1}{k\Vol(\Sigma,{\widetilde g})}
 \int_\Sigma
 \mathcal A_{\Oo}(T_p\Sigma)\,d{\widetilde\Sigma}.
\]
Since the quotient between the integral and the volume is unchanged
under the constant rescaling ${g_0\longmapsto3g_0}$, multiplication by $3$
gives
\[
 \lambda_2(L)
 \leq
 2k+
 \frac{3}{k\Vol(\Sigma,g_0)}
 \int_\Sigma
 \mathcal A_{\Oo}(T_p\Sigma)\,d\Sigma.
\]

As in the proof of Theorem~\ref{thm:main-F}, equality forces
\(
|\widetilde\sigma|^2
=
k|\vec H_{\widetilde\sigma}|^2.
\)
Thus the trace-free part of $\widetilde\sigma$, and equivalently that
of $\sigma$, vanishes identically. Hence $\Sigma$ is totally
umbilical in $\Oo P^2$.
\end{proof}

\begin{remark}
The correction term $\Theta_{\F}$ in
\eqref{eq:main-sharp} is an intrinsic feature of the complex and
quaternionic projective settings. Indeed, for an arbitrary real
$k$-plane
\(
E\subset T_p\F P^m,
\)
the quantities
\[
\left|\sum_{i=1}^kB(e_i,e_i)\right|^2
\qquad\text{and}\qquad
\sum_{i,j=1}^k|B(e_i,e_j)|^2
\]
are not determined solely by $k$. In the complex case, they also depend
on the position of $E$ relative to the Kähler structure, or
equivalently on its Kähler angles. In the quaternionic case, the
corresponding dependence is governed by the quaternionic Kähler
angles. The function $\Theta_{\F}(E)$ records precisely this additional
tangent-plane information. Therefore, the real-projective computation
cannot be extended to $\C P^m$ or $\Hh P^m$ by merely replacing the
underlying division algebra.
\end{remark}

\section{The totally umbilical equality models}
\label{sec:umbilical-models}

We now examine the totally umbilical submanifolds appearing in the
equality analysis of Theorem~\ref{thm:main-F}. 

\subsection{Totally umbilical submanifolds of complex projective space}
\label{subsec:umbilical-CP}

Throughout this section, $\C P^m$ is endowed with the Fubini-Study
metric $g_0$ normalized to have constant holomorphic sectional
curvature $4$. We use the spectral convention
\[
Lu+\lambda u=0,
\qquad
L=\Delta+|\sigma|^2+k.
\]

We recall the classification of totally umbilical submanifolds in
complex space forms due to Chen and Ogiue
\cite{ChenOgiue1974}; see also
\cite{Chen1980,ChenSurvey2013,ChenVerheyen1983}.
Let $\Sigma^k$, $k\geq2$, be a connected totally umbilical
submanifold of $\C P^m$. Then, locally, $\Sigma$ is one of the
following:

\begin{enumerate}
 \item a totally geodesic complex projective subspace
 \[
 \C P^\ell\subset\C P^m,
 \qquad k=2\ell;
 \]

 \item a totally geodesic totally real subspace
 \[
 \R P^k\subset\C P^m;
 \]

 \item an extrinsic sphere contained in a totally geodesic totally
 real projective subspace,
 \[
 S^k(r)\subset\R P^{k+1}\subset\C P^m,
 \qquad 0<r<\frac{\pi}{2}.
 \]
\end{enumerate}

We compute the second eigenvalue of $L$ for each of these models.

\begin{proposition}\label{prop:umbilical-CP}
Let $\Sigma^k$ be a closed, connected, totally umbilical submanifold of\linebreak
$(\C P^m,g_0)$, with $k\geq2$. Then the following statements hold.

\begin{enumerate}
 \item If
 \(
 \Sigma=\C P^\ell\subset\C P^m
\), \(k=2\ell
 \), 
 is a totally geodesic complex projective subspace, then
 \(
 \lambda_2(L)=k+4.
 \)

 \item If
 \(
 \Sigma=\R P^k\subset\C P^m
 \)
 is the standard totally geodesic totally real projective subspace,
 then
 \(
 \lambda_2(L)=k+2.
 \)

 \item If the immersion is the double covering
 \(
 S^k\longrightarrow\R P^k\subset\C P^m
 \), 
 then
 \(
 \lambda_2(L)=0.
 \)

 \item If
 \(
 \Sigma=S^k(r)\subset\R P^{k+1}\subset\C P^m
\), \(0<r<\frac{\pi}{2}\), 
 is a non-totally-geodesic extrinsic sphere, then
 \(
 \lambda_2(L)=0.
 \)
\end{enumerate}
\end{proposition}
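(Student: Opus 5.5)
In every case $\Sigma$ is totally umbilical, and in each model $|\sigma|^2$ is constant. So the potential $|\sigma|^2+k$ is a constant $c$. The first eigenfunction of $L$ is then constant, and $\lambda_2(L)=\mu_1(\Sigma)-c$, where $\mu_1$ is the first nonzero eigenvalue of the induced Laplacian. The plan is therefore, for each model: identify the induced metric, compute $\mu_1$ from the known spectrum, compute $|\sigma|^2$, and subtract.

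\textbf{Totally geodesic cases.} For (1), $\sigma=0$ and the induced metric on $\C P^\ell$ is Fubini--Study with holomorphic sectional curvature $4$. The first nonzero eigenvalue is $4(\ell+1)=2k+4$. Here I will rely on the standard fact that, for this normalization, the spectrum of $\C P^\ell$ is $\{4p(p+\ell)\}$, obtained from $U(1)$-invariant harmonics of bidegree $(p,p)$ on $\Sph^{2\ell+1}$. This gives $\lambda_2(L)=2k+4-k=k+4$. For (2), the totally real $\R P^k\subset\C P^m$ is totally geodesic. Since $\langle JX,Y\rangle=0$ for $X,Y$ tangent to it, the curvature formula $K_{g_0}=1+3\langle JX,Y\rangle^2$ shows the induced metric has constant curvature $1$. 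It is therefore the unit $\R P^k$, and the computation in Lemma~\ref{lem:geodesic-sphere} (case $r=\pi/2$) gives $\lambda_2(L)=2(k+1)-k=k+2$.

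\textbf{Covering and extrinsic-sphere cases.} For (3), the pullback metric on $S^k$ is the round unit metric and $\sigma=0$. The first nonzero eigenvalue is $k$, coming from the linear functions, so $\lambda_2(L)=k-k=0$, exactly as in the proof of Corollary~\ref{cor:real-projective}. For (4), the sphere lies in a totally geodesic totally real $\R P^{k+1}$, which has constant curvature $1$. Its second fundamental form in $\C P^m$ is therefore the same as its second fundamental form in $\R P^{k+1}$, because the inclusion $\R P^{k+1}\subset\C P^m$ contributes nothing. Hence $|\sigma|^2=k\cot^2 r$ and the induced metric is round of radius $\sin r$. The computation of Lemma~\ref{lem:geodesic-sphere} then applies verbatim: $k/\sin^2 r-k\cot^2r-k=0$.

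The main obstacle is (1): justifying the first eigenvalue $4(\ell+1)$ of $\C P^\ell$ in the holomorphic-curvature-$4$ normalization, and in particular checking that normalization against the quotient metric $g_0$ on $\Sph^{2\ell+1}/\Sph^1$. I will do this by noting two things. First, $\Phi_{\C}$ restricted to $\C P^\ell$ is the standard embedding and is minimal into a sphere with induced metric $s_\ell g_0$. Second, by Takahashi's theorem its coordinate functions are eigenfunctions with eigenvalue $\dim\cdot(\text{radius})^{-2}$ in the rescaled metric, namely $2\ell$ for $s_\ell g_0$. Converting to $g_0$ gives $2\ell\cdot s_\ell=4(\ell+1)$. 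Finally, I will confirm from the $(p,p)$ description that no smaller nonzero eigenvalue exists.
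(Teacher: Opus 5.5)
Your proposal is correct and follows essentially the paper's route: in each model the potential $|\sigma|^2+k$ is constant, so $\lambda_2(L)=\mu_1-|\sigma|^2-k$, with $\mu_1=4(\ell+1)$, $2(k+1)$, $k$ and $k/\sin^2 r$ respectively, and $|\sigma|^2=k\cot^2 r$ in the last case. Your Takahashi check of the normalization in case (1) is a sensible addition, since the paper simply cites the spectrum $4j(j+\ell)$; however, it should use the standard embedding built from $\C^{\ell+1}$ (i.e.\ $\Phi_{\C}$ with $m$ replaced by $\ell$), because the restriction of the $\C P^m$ embedding induces $s_m g_0$ rather than $s_\ell g_0$ and lies in a sphere of radius $c_m/c_\ell$, which still yields $4(\ell+1)$ but not through the numbers you quote.
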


\begin{proof}
We consider the four cases separately.

\medskip
\noindent
\emph{Case 1: the complex projective subspace.}
Let
\(
\Sigma=\C P^\ell\subset\C P^m
\), \(k=2\ell
\). The inclusion is totally geodesic, and hence
\(
\sigma=0
\), \(
L=\Delta+k
\). For the Fubini-Study metric of holomorphic sectional curvature $4$,
the Laplace spectrum of $\C P^\ell$ is
\[
\mu_j=4j(j+\ell),
\qquad
j=0,1,2,\ldots.
\]
In particular,
\(
\mu_0=0
\), \(
\mu_1=4(\ell+1).
\)
Since the potential of $L$ is the constant $k=2\ell$, the first
eigenvalue of $L$ is
\(
\lambda_1(L)=-k,
\)
and the second eigenvalue is
\[
\lambda_2(L)
=\mu_1-k
=4(\ell+1)-2\ell
=2\ell+4
=k+4.
\]

The tangent bundle of $\C P^\ell$ is invariant under the complex
structure. Hence
\[
\Theta_{\C}(T_p\Sigma)=k
\]
at every point. Therefore the right-hand side of
\eqref{eq:main-sharp} is
\[
k+2+\frac{2}{k\Vol(\Sigma)}
\int_\Sigma k\,d\Sigma
=k+4.
\]
Thus the totally geodesic complex projective subspace realizes equality
in \eqref{eq:main-sharp} and also in the universal estimate
\(
\lambda_2(L)\leq k+4.
\)

\medskip
\noindent
\emph{Case 2: the real projective subspace.}
Let
\(
\Sigma=\R P^k\subset\C P^m,
\){ $k\le m$,} be the standard totally geodesic and totally real inclusion. The
induced metric has constant sectional curvature $1$, and
\(
\sigma=0.
\)
The Laplace eigenvalues of the unit real projective space are
\[
\mu_j=2j(2j+k-1),
\qquad
j=0,1,2,\ldots.
\]
Indeed, only the even spherical harmonics on $S^k$ descend through the
antipodal quotient. Thus
\(
\mu_0=0
\), 
\(
\mu_1=2(k+1).
\)
It follows that
\[
\lambda_2(L)
=
\mu_1-k
=
2(k+1)-k
=
k+2.
\]

Since $\R P^k$ is totally real {and isotropic in $\C P^m$, meaning that $J(T_{[x]}\R P^k)\perp T_{[x]}\R P^k$ for each $[x]\in\R P^k$}, we have
\(
\Theta_{\C}(T_p\Sigma)=0
\), $p\in\Sigma$. Consequently, the right-hand side of \eqref{eq:main-sharp} is precisely
$k+2$. Hence the totally geodesic real projective subspace also realizes
equality in the sharp estimate, although it does not realize equality
in the coarser estimate $\lambda_2(L)\leq k+4$.

\medskip
\noindent
\emph{Case 3: the spherical covering of the real projective
subspace.}
Consider the totally geodesic immersion
\(
\iota:S^k\longrightarrow\R P^k\subset\C P^m
\)
given by the antipodal covering. The induced metric on $S^k$ is the
unit round metric, and again $\sigma=0$. However, the first nonzero
Laplace eigenvalue of $S^k$ is
\[
\mu_1(S^k)=k,
\]
because the degree-one spherical harmonics are now admissible.
Therefore
\[
\lambda_2(L)=\mu_1(S^k)-k=0.
\]

This shows that the value of $\lambda_2(L)$ depends not only on the
image of the immersion but also on its parametrization. In particular,
having totally geodesic image does not by itself guarantee equality in
\eqref{eq:main-sharp}.

\medskip
\noindent
\emph{Case 4: the extrinsic spheres.}
Let
\(
S^k(r)\subset\R P^{k+1}\subset\C P^m\), 
\(0<r<\frac{\pi}{2}
\),
be a geodesic sphere of radius $r$. Since
$\R P^{k+1}\subset\C P^m$ is totally geodesic, the second fundamental
form of $S^k(r)$ in $\C P^m$ agrees with its second fundamental form
in $\R P^{k+1}$.

The induced metric is the round metric of radius $\sin r$. Therefore
the first nonzero Laplace eigenvalue is
\[
\mu_1=\frac{k}{\sin^2r}.
\]
The sphere is totally umbilical, with principal curvatures equal to
$\cot r$, and consequently
\(
|\sigma|^2=k\cot^2r.
\)
It follows that
\[
\lambda_2(L)=
\frac{k}{\sin^2r}-k\cot^2r-k=
\frac{k}{\sin^2r}
-\frac{k\cos^2r}{\sin^2r}
-k
=0.
\]

{Since, t}he sphere is contained in the totally real {isotropic} subspace
$\R P^{k+1}$, we have
\(
\Theta_{\C}(T_pS^k(r))=0.
\)
The right-hand side of \eqref{eq:main-sharp} is therefore $k+2$,
whereas
\[
\lambda_2(L)=0.
\]
Thus a non-totally-geodesic totally umbilical submanifold does not
realize equality in the sharp estimate.
\end{proof}

\begin{corollary}\label{cor:equality-CP}
Let
\(
\iota:\Sigma^k\longrightarrow\C P^m
\), \(k\geq2
\), be a closed connected immersion. If equality holds in
\eqref{eq:main-sharp}, then the image of $\Sigma$ is totally geodesic.
More precisely, up to an ambient isometry, the equality models are
\[
\C P^{{\ell}}\subset\C P^m,\qquad{k=2\ell,}
\]
and
\[
\R P^k\subset\C P^m.
\]
In the real-projective case, equality requires the immersion to have
the spectrum of $\R P^k$; in particular, the double covering
$S^k\longrightarrow\R P^k$ does not realize equality.
\end{corollary}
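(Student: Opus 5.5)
Equality in \eqref{eq:main-sharp} forces $\Sigma$ to be totally umbilical by Theorem~\ref{thm:main-F}. The plan is to combine this with the Chen--Ogiue classification recalled above. Since $\Sigma$ is closed and connected with $k\geq2$, the classification is only local. We therefore argue by analyticity and connectedness, noting that umbilical submanifolds with parallel mean curvature are real analytic. This shows that the image of $\iota$ lies in a single model: a totally geodesic $\C P^\ell$ with $k=2\ell$, a totally geodesic totally real $\R P^k$, or a geodesic sphere $S^k(r)\subset\R P^{k+1}$ with $0<r<\pi/2$. Since $\Sigma$ is compact and the immersion is a local isometry onto an open subset of a complete model of the same dimension, $\iota$ is a Riemannian covering onto that model.

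We then exclude the non-totally-geodesic spheres. Case~4 of Proposition~\ref{prop:umbilical-CP} gives $\lambda_2(L)=0$ for an embedded $S^k(r)$. Since a sphere with $k\geq2$ is simply connected, any covering of it is trivial, so $\lambda_2(L)=0$ holds for every such immersion as well. On the other hand, $\Theta_{\C}\equiv0$ along a submanifold of the totally real, isotropic $\R P^{k+1}$, so the right-hand side of \eqref{eq:main-sharp} equals $k+2>0$. Hence equality fails for these spheres, and the image of $\Sigma$ must be totally geodesic.

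For the totally geodesic images we examine coverings. The space $\C P^\ell$ is simply connected, so $\iota$ is an embedding onto it, and Case~1 shows that equality holds. For $\R P^k$ with $k\geq2$, the only connected coverings are the identity and the antipodal double cover $S^k\to\R P^k$. Case~3 shows that the double cover gives $\lambda_2(L)=0<k+2$, while Case~2 shows that $\R P^k$ itself attains equality. Therefore the immersion must induce the spectrum of $\R P^k$, as stated. Finally, we use the transitivity of the holomorphic isometry group of $\C P^m$ on totally geodesic $\C P^\ell$'s and on totally geodesic totally real $\R P^k$'s. This gives the models up to ambient isometry.

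The main obstacle is the passage from the local classification to the global statement. One must justify that a compact connected umbilical immersion lies in one complete model. This requires both analytic continuation and the fact that an extrinsic sphere is determined by its $2$-jet at a point, since it is the orbit of a parallel-curvature ODE. A further subtlety is showing that the immersion onto the model is a covering, which needs completeness and properness, both coming from compactness of $\Sigma$. I would handle the first point by the standard rigidity of submanifolds with parallel second fundamental form.
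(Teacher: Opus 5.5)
Your proposal is correct and follows the same route as the paper. Theorem~\ref{thm:main-F} gives total umbilicity, and the Chen--Ogiue list leaves three types. Proposition~\ref{prop:umbilical-CP} then rules out the non-totally-geodesic spheres (since $\lambda_2(L)=0<k+2$) and the double cover $S^k\to\R P^k$, while $\C P^\ell$ and $\R P^k$ attain equality.

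The only difference is that you spell out steps the paper leaves implicit. These are the passage from the local classification to a single complete model, the fact that $\iota$ is then a Riemannian covering (trivial over $\C P^\ell$ and over the spheres by simple connectivity), and the transitivity of the ambient isometry group on each type of model.
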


\begin{proof}
Equality in \eqref{eq:main-sharp} implies, by
Theorem~\ref{thm:main-F}, that $\Sigma$ is totally umbilical. The
classification recalled above leaves three possible types. By
Proposition~\ref{prop:umbilical-CP}, every non-totally-geodesic
extrinsic sphere satisfies $\lambda_2(L)=0$ and hence cannot realize
equality. Therefore the image must be either a totally geodesic
complex projective subspace or a totally geodesic real projective
subspace. The computation for the double covering shows that the
parametrization must also be taken into account.
\end{proof}

\begin{remark}
The two totally geodesic equality models correspond to the two extreme
types of tangent planes in complex projective space. For a complex
$k$-plane,
\(
\Theta_{\C}=k,
\)
and equality gives $\lambda_2(L)=k+4$. For a totally real {isotropic} $k$-plane,
\(
\Theta_{\C}=0,
\)
and equality gives $\lambda_2(L)=k+2$. The correction term
$\Theta_{\C}$ therefore distinguishes spectrally between the complex
and totally real {isotropic} equality models.
\end{remark}

\subsection{Totally umbilical submanifolds of quaternionic projective space}
\label{subsec:umbilical-HP}

We next consider quaternionic projective space. Throughout this
subsection, $\Hh P^m$ is endowed with its standard metric $g_0$,
normalized to have quaternionic sectional curvature $4$. As before,
we use the convention
\[
Lu+\lambda u=0,
\qquad
L=\Delta+|\sigma|^2+k.
\]

According to the classification of Chen
\cite{ChenQuaternionic1978}, every totally umbilical submanifold of
real dimension $k>4$ in $\Hh P^m$ is locally one of the following:

\begin{enumerate}
 \item a totally geodesic quaternionic projective subspace
 \[
 \Hh P^\ell\subset\Hh P^m,
 \qquad k=4\ell;
 \]

 \item a totally geodesic complex projective subspace
 \[
 \C P^\ell\subset\Hh P^m,
 \qquad k=2\ell;
 \]

 \item a totally geodesic totally real projective subspace
 \[
 \R P^k\subset\Hh P^m;
 \]

 \item an extrinsic sphere contained in a totally geodesic totally
 real projective subspace,
 \[
 S^k(r)\subset\R P^{k+1}\subset\Hh P^m,
 \qquad 0<r<\frac{\pi}{2}.
 \]
\end{enumerate}

The next result computes the second eigenvalue of the operator {$L=\Delta+|\sigma|^2+k$} on these models.

\begin{proposition}\label{prop:umbilical-HP}
Let $\Sigma^k$ be one of the standard totally umbilical submanifolds of $(\Hh P^m,g_0)$ listed above. {We have:}
\begin{enumerate}
 \item if
 \(
 \Sigma=\Hh P^\ell\subset\Hh P^m
\), \(k=4\ell
 \), then
 \(
 \lambda_2(L)=k+8
 \);

 \item if
 \(
 \Sigma=\C P^\ell\subset\Hh P^m
\), \(k=2\ell
 \), then
 \(
 \lambda_2(L)=k+4;
 \)

 \item if
 \(
 \Sigma=\R P^k\subset\Hh P^m
 \), 
 then
 \(
 \lambda_2(L)=k+2;
 \)

 \item if the immersion is the double covering
 \(
 S^k\longrightarrow\R P^k\subset\Hh P^m
 \), 
 then
 \(
 \lambda_2(L)=0;
 \)

 \item if
 \(
 \Sigma=S^k(r)\subset\R P^{k+1}\subset\Hh P^m
\), \(0<r<\frac{\pi}{2}
 \), then
 \(
 \lambda_2(L)=0.
 \)
\end{enumerate}
\end{proposition}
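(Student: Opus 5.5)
The plan is to treat the totally geodesic cases (1)--(4) uniformly and to reduce case (5) to the computation already done in Lemma~\ref{lem:geodesic-sphere}. In cases (1)--(4) the inclusion is totally geodesic, so $\sigma=0$ and $L=\Delta+k$ has constant potential. Hence the first eigenfunction is constant, $\lambda_1(L)=-k$, and
\[
\lambda_2(L)=\mu_1(\Sigma,\iota^*g_0)-k,
\]
where $\mu_1$ is the first nonzero Laplace eigenvalue of the induced metric. Everything then reduces to identifying the induced metric and its $\mu_1$.

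\emph{Identifying the induced metrics.} I will use the curvature formula $K_{g_0}(X,Y)=1+3\sum_\alpha\langle J_\alpha X,Y\rangle^2$ together with the Gauss equation, which reduces to restriction since $\sigma=0.$
\begin{enumerate}
\item For $\Hh P^\ell\subset\Hh P^m$, the tangent spaces are invariant under all $J_\alpha$. So the induced metric is the standard $g_0$ of $\Hh P^\ell$, with quaternionic sectional curvature $4$.
\item For $\C P^\ell\subset\Hh P^m$, I choose the admissible frame so that $T\Sigma$ is $J_1$-invariant and $J_2(T\Sigma),J_3(T\Sigma)\perp T\Sigma$. The curvature restricts to $1+3\langle J_1X,Y\rangle^2$, so the induced metric is Fubini--Study with holomorphic sectional curvature $4$.
\item For $\R P^k$, all $J_\alpha(T\Sigma)\perp T\Sigma$, so $K\equiv1$.
\end{enumerate}
Concretely, each of these subspaces is the image of $\F'P^\ell\subset\F'^{\ell+1}\subset\Hh^{m+1}$ for $\F'\in\{\Hh,\C,\R\}$, and the horizontal identification of Section~\ref{prel} shows directly that its quotient metric is the restriction of $g_0$.

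\emph{Computing $\mu_1$.} I will compute $\mu_1$ uniformly from Proposition~\ref{prop:minimal-Bvv} and Takahashi's theorem \cite{Takahashi1966}. The standard embedding $\Phi_{\F'}\colon(\F'P^\ell,s_\ell g_0)\to\Sph^N$ is minimal. Its coordinate functions are therefore Laplace eigenfunctions with eigenvalue $\dim_\R\F'P^\ell=d'\ell$ for the metric $s_\ell g_0$, and hence with eigenvalue
\[
d'\ell\,s_\ell=2d'(\ell+1)
\]
for $g_0$. These functions are nonconstant, since $\Phi$ is an embedding. I must also justify that this is the \emph{first} nonzero eigenvalue. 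For that I will invoke the known spectra: $4j(j+2\ell+1)$ for $\Hh P^\ell$, $4j(j+\ell)$ for $\C P^\ell$ (as in Proposition~\ref{prop:umbilical-CP}), and the even harmonics for $\R P^k$. Each of these has lowest nonzero value $2d'(\ell+1)$. This gives:
\begin{enumerate}
\item $\Hh P^\ell$ with $k=4\ell$: $\mu_1=8(\ell+1)$, so $\lambda_2=8\ell+8-4\ell=k+8$.
\item $\C P^\ell$ with $k=2\ell$: $\mu_1=4(\ell+1)$, so $\lambda_2=k+4$.
\item $\R P^k$: $\mu_1=2(k+1)$, so $\lambda_2=k+2$.
\item The double cover $S^k\to\R P^k$ carries the unit round metric, so $\mu_1=k$ and $\lambda_2=0$. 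This is exactly Case~3 of Proposition~\ref{prop:umbilical-CP}.
\end{enumerate}

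\emph{Case (5).} Since $\R P^{k+1}\subset\Hh P^m$ is totally geodesic with curvature $1$, the second fundamental form of $S^k(r)$ in $\Hh P^m$ equals its second fundamental form in $\R P^{k+1}$. So $|\sigma|^2=k\cot^2 r$ and the induced metric is round of radius $\sin r$. The computation of Lemma~\ref{lem:geodesic-sphere} then gives $\lambda_2(L)=k/\sin^2r-k\cot^2r-k=0$.

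The main obstacle is normalization bookkeeping. I must confirm that the totally geodesic $\Hh P^\ell$ and $\C P^\ell$ inherit precisely the normalization under which the quoted spectra hold, and that the Takahashi eigenvalue $2d'(\ell+1)$ is the lowest one. I would first check this by the Takahashi scaling computation above. I would then cross-check against the Einstein constants in the remark after Lemma~\ref{lem:geodesic-sphere}, namely $\operatorname{Ric}=4(\ell+2)g_0$ on $\Hh P^\ell$; this is consistent with Lichnerowicz's bound, since $\frac{4\ell}{4\ell-1}\cdot4(\ell+2)\le 8(\ell+1)$.
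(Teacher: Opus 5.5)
Your proposal is correct and follows essentially the same route as the paper. In the totally geodesic cases you use $\sigma=0$ to reduce to $\lambda_2(L)=\mu_1-k$ with the quoted spectra of $\Hh P^\ell$, $\C P^\ell$, $\R P^k$ and $S^k$, and for the extrinsic spheres you repeat the computation $k/\sin^2 r-k\cot^2 r-k=0$. Your extra checks, namely identifying the induced metrics via horizontal lifts and recovering $\mu_1=2d'(\ell+1)$ from Takahashi's theorem applied to $\Phi_{\F'}$, are consistent with the paper's normalizations. As you note, however, the fact that this is the \emph{first} nonzero eigenvalue still rests on the same known spectra the paper cites.
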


\begin{proof}
We analyze each model separately.

\medskip
\noindent
\emph{Case 1: quaternionic projective subspaces.}
Let
\(
\Sigma=\Hh P^\ell\subset\Hh P^m
\), \(k=4\ell
\). The inclusion is totally geodesic, so that
\(
\sigma=0
\), \(
L=\Delta+k.
\)
For the standard metric of quaternionic sectional curvature $4$, the
eigenvalues of the Laplacian on $\Hh P^\ell$ are
\[
\mu_j=4j(j+2\ell+1),
\qquad
j=0,1,2,\ldots;
\]
see, for instance,
\cite{BergerGauduchonMazet1971,Chavel1984}. Hence
\(
\mu_0=0,
\)
\(
\mu_1=8(\ell+1).
\)
It follows that
\[
\lambda_2(L)
=\mu_1-k=8(\ell+1)-4\ell=4\ell+8=k+8.
\]

For a quaternionic tangent plane, all three local quaternionic
structures preserve the tangent space. Consequently,
\(
\Theta_{\Hh}(T_p\Sigma)=3k.
\)
The right-hand side of \eqref{eq:main-sharp} is therefore
\[
k+2+\frac{2}{k\Vol(\Sigma)}
\int_\Sigma 3k\,d\Sigma
=k+8.
\]
Thus $\Hh P^\ell$ realizes equality both in the sharp estimate and in
the universal estimate
\(
\lambda_2(L)\leq k+8.
\)

\medskip
\noindent
\emph{Case 2: complex projective subspaces.}
Let
\(
\Sigma=\C P^\ell\subset\Hh P^m
\), \(k=2\ell\), be a standard totally geodesic complex projective subspace. Its
induced metric is the Fubini-Study metric of holomorphic sectional
curvature $4$, and $\sigma=0$. Therefore
\(
\mu_1({\Delta})=4(\ell+1),
\)
and hence
\[
\lambda_2(L)
=4(\ell+1)-2\ell=2\ell+4=k+4.
\]

Along $\C P^\ell$, one of the three local quaternionic structures
preserves the tangent space, whereas the other two map it into the
normal space. {Therefore, i}t follows that
\(
\Theta_{\Hh}(T_p\Sigma)=k.
\)
Consequently, the right-hand side of \eqref{eq:main-sharp} becomes
\[
k+2+\frac{2}{k\Vol(\Sigma)}
\int_\Sigma k\,d\Sigma
=k+4.
\]

Thus the standard complex projective subspaces also realize equality
in the sharp estimate {\eqref{eq:main-sharp}}.

\medskip
\noindent
\emph{Case 3: real projective subspaces.}
Let
\(
\Sigma=\R P^k\subset\Hh P^m
\)
be a standard totally real and totally geodesic subspace. Its induced
metric has constant sectional curvature $1$, and
\(
\sigma=0.
\)
The first nonconstant eigenvalue of ${\Delta}$ on the unit
$\R P^k$ is
\(
\mu_1=2(k+1).
\)
Therefore
\[
\lambda_2(L)=2(k+1)-k=k+2.
\]

Since the tangent space is totally real {and isotropic} with respect to each of the
three quaternionic structures,
\(
\Theta_{\Hh}(T_p\Sigma)=0.
\)
Hence the right-hand side of \eqref{eq:main-sharp} is exactly $k+2$,
and the real projective subspace{s} realize equality in the sharp
estimate.

\medskip
\noindent
\emph{Case 4: the spherical covering of $\R P^k$.}
Consider the totally geodesic immersion
\[
S^k\longrightarrow\R P^k\subset\Hh P^m
\]
given by the antipodal covering. Although its image is totally
geodesic, the induced domain is the unit round sphere, whose first
nonzero Laplace eigenvalue is
\(
\mu_1(S^k)=k.
\)
Since $\sigma=0$, we obtain
\[
\lambda_2(L)=k-k=0.
\]
Thus, as in the complex-projective setting, the value of
$\lambda_2(L)$ also depends on the parametrization of the totally
geodesic image.

\medskip
\noindent
\emph{Case 5: extrinsic spheres.}
Let
\(
S^k(r)\subset\R P^{k+1}\subset\Hh P^m
\), \(0<r<\frac{\pi}{2}
\). Because $\R P^{k+1}$ is totally geodesic in $\Hh P^m$, the second
fundamental form of $S^k(r)$ in $\Hh P^m$ agrees with its second
fundamental form in $\R P^{k+1}$.

The induced metric is round with radius $\sin r$, and therefore
\[
\mu_1({\Delta})=\frac{k}{\sin^2r}.
\]
Moreover, all principal curvatures are equal to $\cot r$, so
\(
|\sigma|^2=k\cot^2r.
\)
It follows that
\[
\lambda_2(L)
=
\frac{k}{\sin^2r}-k\cot^2r-k
=0.
\]

Since {$T_pS^k(r)\subset T_p\R P^{k+1}$, one has}
\(
\Theta_{\Hh}(T_pS^k(r))=0.
\)
{Therefore t}he right-hand side of \eqref{eq:main-sharp} is $k+2$, while
$\lambda_2(L)=0$. {This shows that} non-totally-geodesic extrinsic spheres do not realize equality {in \eqref{eq:main-sharp}}.
\end{proof}

\begin{corollary}\label{cor:equality-HP}
Let
\(
\iota:\Sigma^k\longrightarrow\Hh P^m
\), 
\(k>4
\), be a closed connected immersion. If equality holds in
\eqref{eq:main-sharp}, then the image of $\Sigma$ is totally
geodesic. More precisely, up to an ambient isometry, the possible
images are
\[
\R P^k,\qquad
\C P^{k/2},\qquad
\Hh P^{k/4},
\]
whenever the corresponding divisibility and dimensional conditions
are satisfied.
\end{corollary}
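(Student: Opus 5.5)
The plan mirrors the proof of Corollary~\ref{cor:equality-CP}. First, assuming equality in \eqref{eq:main-sharp}, I will invoke the equality clause of Theorem~\ref{thm:main-F}. It says that the trace-free part of $\widetilde\sigma$, and hence of $\sigma$, vanishes identically, so $\Sigma$ is totally umbilical in $\Hh P^m$. Since $\Sigma$ is connected and $k>4$, Chen's classification \cite{ChenQuaternionic1978} applies. Locally, $\iota(\Sigma)$ is an open piece of one of four models:
\begin{itemize}
\item a totally geodesic $\Hh P^{\ell}$ with $k=4\ell$;
\item a totally geodesic $\C P^{\ell}$ with $k=2\ell$;
\item a totally geodesic totally real $\R P^k$;
\item an extrinsic sphere $S^k(r)\subset\R P^{k+1}$ with $0<r<\pi/2$.
\end{itemize}

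The next step is to pass from local to global. Totally umbilical submanifolds with $k\ge2$ have parallel mean curvature, by the Codazzi equation in a locally symmetric ambient space; here one uses that the curvature term vanishes for the listed types. Hence the type is constant on the connected manifold $\Sigma$. Because $\Sigma$ is closed, $\iota$ is then a Riemannian covering onto the complete model (totally geodesic subspace or geodesic sphere). This is the same covering reasoning used in Corollary~\ref{cor:real-projective}. I would phrase it as follows: the set where $\iota$ locally agrees with a given model is open and closed, and a local isometry from a compact manifold onto a connected complete model is a covering.

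Third, I rule out the extrinsic spheres. If the image is $S^k(r)$ with $0<r<\pi/2$, the immersion is a covering of a simply connected sphere, since $k\ge2$. It is therefore an isometry, and Proposition~\ref{prop:umbilical-HP}(5) gives $\lambda_2(L)=0$. On the other hand, $\Theta_{\Hh}(T_p\Sigma)=0$ there, so the right-hand side of \eqref{eq:main-sharp} equals $k+2>0$, contradicting equality. Hence the image is totally geodesic.

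Finally, the three surviving images $\R P^k$, $\C P^{k/2}$, $\Hh P^{k/4}$ all occur, realizing equality by Proposition~\ref{prop:umbilical-HP}(1)--(3). They require the divisibility conditions $2\mid k$ and $4\mid k$ respectively, and the dimensional conditions $k\le m$, $k/2\le m$, $k/4\le m$ for the standard embeddings. Uniqueness up to ambient isometry follows because $\mathrm{Sp}(m+1)$ acts transitively on each class of standard totally geodesic subspaces. As a side remark, coverings can also be discussed: the double cover $S^k\to\R P^k$ fails equality by Proposition~\ref{prop:umbilical-HP}(4), while $\C P^\ell$ and $\Hh P^\ell$ are simply connected. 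This is not needed for the statement, which concerns only the image.

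The main obstacle is the local-to-global step: making rigorous that a closed immersed totally umbilical submanifold, locally of one of Chen's types, globally covers a complete model. I would handle this through parallel mean curvature and the standard fact that a complete totally umbilical submanifold with parallel mean curvature is an orbit of the relevant totally geodesic symmetric subspace.
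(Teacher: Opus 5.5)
Your proposal is correct and follows the same route as the paper: equality forces total umbilicity by Theorem~\ref{thm:main-F}, and Chen's classification applies since $k>4$. The non-totally-geodesic extrinsic spheres are then excluded because Proposition~\ref{prop:umbilical-HP} gives them $\lambda_2(L)=0$, while the right-hand side of \eqref{eq:main-sharp} equals $k+2$; your additional local-to-global and covering argument (constancy of the type on connected $\Sigma$, and that a closed immersion onto an embedded $S^k(r)$ with $k\geq2$ is a diffeomorphism) fills in a step that the paper's short proof takes for granted.
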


\begin{proof}
By Theorem~\ref{thm:main-F}, equality {in \eqref{eq:main-sharp}} implies that $\Sigma$ is totally
umbilical. Since $k>4$, Chen's classification
\cite{ChenQuaternionic1978} applies. {On the other hand,} Proposition
\ref{prop:umbilical-HP} shows that the non-totally-geodesic extrinsic
spheres satisfy $\lambda_2(L)=0$ and hence cannot realize equality.
Therefore the image is one of the totally geodesic real, complex, or
quaternionic projective subspaces.
\end{proof}

\begin{remark}
The three totally geodesic equality models are distinguished by the
value of the quaternionic tangent-plane invariant:
\[
\begin{array}{c|c|c}
\text{model}
&
\Theta_{\Hh}(T_p\Sigma)
&
\lambda_2(L)
\\ \hline
\R P^k
&
0
&
k+2,
\\[1mm]
\C P^{k/2}
&
k
&
k+4,
\\[1mm]
\Hh P^{k/4}
&
3k
&
k+8.
\end{array}
\]
Thus $\Theta_{\Hh}$ detects whether the tangent space is totally real {isotropic},
complex, or quaternionic and gives the exact correction required in
the sharp estimate.
\end{remark}

\subsection{Totally umbilical submanifolds of the Cayley plane}
\label{subsec:umbilical-Cayley}

We conclude the discussion of totally umbilical models with the Cayley
projective plane. The classification in this setting is more involved
than in the real, complex, and quaternionic projective cases. By the
results of Chen \cite{Chen1977} and Nikolaevskij
\cite{Nikolaevskij1994}, a totally umbilical submanifold of
$\Oo P^2$ has real dimension at most eight. Moreover, every totally
umbilical submanifold that is maximal with respect to inclusion is, up
to an ambient isometry, one of the following:

\begin{enumerate}
 \item a totally geodesic quaternionic projective plane
 \[
 \Hh P^2\subset\Oo P^2;
 \]

 \item a totally geodesic Cayley line
 \[
 \Oo P^1\cong S^8(1/2)\subset\Oo P^2;
 \]

 \item an extrinsic sphere contained in a totally geodesic Cayley line;

 \item a non-totally-geodesic totally umbilical submanifold contained
 in a totally geodesic copy of $\Hh P^2$.
\end{enumerate}

Here, $\Oo P^2$ is equipped with its standard metric $g_0$, normalized
so that its sectional curvatures lie between $1$ and $4$. With this
normalization, the Cayley line $\Oo P^1$ is isometric to the round
sphere $S^8(1/2)$ of constant sectional curvature $4$.

Unlike the complex and quaternionic projective cases, total
umbilicity in $\Oo P^2$ does not immediately reduce the equality
analysis to totally geodesic projective subspaces and real extrinsic
spheres. We next compute $\lambda_2(L)$ for the explicit models in the
first three classes. The models in the fourth class will not be
treated, since their induced geometry and spectral data are not
determined by the preceding description alone.

\begin{proposition}\label{prop:umbilical-Cayley}
For the operator
\(
L=\Delta+|\sigma|^2+k,
\)
the following statements hold.

\begin{enumerate}
 \item On the totally geodesic quaternionic projective plane
 \(
 \Hh P^2\subset\Oo P^2,
 \)
 one has
 \(
 \lambda_2(L)=16.
 \)

 \item On the totally geodesic Cayley line
 \(
 \Oo P^1\cong S^8(1/2)\subset\Oo P^2,
 \)
 one has
 \(
 \lambda_2(L)=24.
 \)

 \item Let
 \(
 S^k(r)\subset S^{k+1}(1/2)
 \subset\Oo P^1\subset\Oo P^2,
\) \(
 0<r<\frac{\pi}{2},
 \)
 be a geodesic $k$-sphere of radius $r$. Then
 \(
 \lambda_2(L)=3k.
 \)
 For $r=\pi/4$, this is a totally geodesic great sphere; for
 $r\neq\pi/4$, it is a non-totally-geodesic extrinsic sphere.
\end{enumerate}
\end{proposition}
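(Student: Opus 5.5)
The plan is to treat the three models separately, exactly as in Propositions~\ref{prop:umbilical-CP} and \ref{prop:umbilical-HP}. In every case the potential $|\sigma|^2+k$ is constant on $\Sigma$, so the first eigenfunction of $L$ is constant. The second eigenvalue is then $\lambda_2(L)=\mu_1(\Delta)-|\sigma|^2-k$, where $\mu_1$ is the first nonzero Laplace eigenvalue of the induced metric. Each case therefore reduces to three steps: identify the induced metric, identify $|\sigma|^2$, and read off $\mu_1$ from a known spectrum.

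\emph{Case (1).} I would first check that, with the normalization $1\le K_{g_0}\le4$ on $\Oo P^2$, the totally geodesic $\Hh P^2$ carries the standard metric of quaternionic sectional curvature $4$. This should follow because the curvature of a totally geodesic submanifold is the restricted ambient curvature, and the maximum $4$ is attained on quaternionic lines. Then $\sigma=0$, $k=8$, and $\ell=2$. The spectrum $\mu_j=4j(j+2\ell+1)$ quoted in the proof of Proposition~\ref{prop:umbilical-HP} gives $\mu_1=24$, hence $\lambda_2(L)=24-8=16$.

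\emph{Case (2).} Here $\Oo P^1\cong S^8(1/2)$ is totally geodesic, so $\sigma=0$ and $k=8$. For a round $k$-sphere of radius $\rho$ one has $\mu_1=k/\rho^2$, so $\mu_1=8\cdot4=32$ and $\lambda_2(L)=32-8=24$.

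\emph{Case (3).} The approach is to use that $S^{k+1}(1/2)$ is a great sphere of $\Oo P^1\cong S^8(1/2)$ (this requires $k\le7$), hence totally geodesic in $\Oo P^1$ and therefore in $\Oo P^2$. Consequently the second fundamental form of $S^k(r)$ in $\Oo P^2$ coincides with its second fundamental form in $S^{k+1}(1/2)$, a space of constant curvature $4$. I interpret $r$ as the geodesic radius in $S^{k+1}(1/2)$, whose diameter is $\pi/2$, so that the equator corresponds to $r=\pi/4$. Under this interpretation:
\begin{itemize}
\item the induced metric is round of radius $\tfrac12\sin(2r)$, giving $\mu_1=4k/\sin^2(2r)$;
\item $S^k(r)$ is umbilical with principal curvatures $2\cot(2r)$, giving $|\sigma|^2=4k\cot^2(2r)$.
\end{itemize}
Therefore
\[
\lambda_2(L)=\frac{4k}{\sin^2(2r)}-\frac{4k\cos^2(2r)}{\sin^2(2r)}-k=3k .
\]
Moreover, $\cot(2r)=0$ exactly when $r=\pi/4$, which gives the totally geodesic versus non-totally-geodesic dichotomy.

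The main obstacle is not the arithmetic but verifying the normalizations: that the induced metrics on $\Hh P^2$ and $\Oo P^1$ really have quaternionic sectional curvature $4$ and constant curvature $4$, respectively. I would settle this by using that the sectional curvatures of $\Oo P^2$ are pinched between $1$ and $4$, with the value $4$ attained exactly on planes tangent to Cayley lines. Since both $\Hh P^2$ and $\Oo P^1$ are totally geodesic, their curvature tensors are the restrictions of the ambient one. Hence the quaternionic lines of $\Hh P^2$ lie in Cayley lines and have curvature $4$, and $\Oo P^1$ has constant curvature $4$, so its radius is $1/2$.
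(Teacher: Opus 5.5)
Your proposal is correct and matches the paper's proof step by step. Because the potential is constant, $\lambda_2(L)=\mu_1-|\sigma|^2-k$. This gives $\mu_1=24$ from the spectrum $4j(j+2\ell+1)$ of $\Hh P^2$, $\mu_1=32$ for $S^8(1/2)$, and, from the curvature-$4$ geodesic-sphere data (radius $\tfrac12\sin 2r$, principal curvatures $2\cot 2r$), the radius-independent value $3k$.

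The only addition is your explicit check of the curvature normalizations of $\Hh P^2$ and $\Oo P^1$, which the paper simply assumes. For $\Hh P^2$ the pinching alone settles it: its intrinsic curvature range $[c,4c]$ must lie in $[1,4]$, which forces $c=1$.
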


\begin{proof}
We consider the three cases separately.

\medskip
\noindent
\emph{Case 1: the quaternionic projective plane.}
The standard inclusion
\(
\Hh P^2\subset\Oo P^2
\)
is totally geodesic. Hence
\(
\sigma=0,
\) \(
k=8,
\) \(
L=\Delta+8.
\)
For the metric of quaternionic sectional curvature $4$, the Laplace
spectrum of $\Hh P^\ell$ is
\[
\mu_j=4j(j+2\ell+1),
\qquad
j=0,1,2,\ldots;
\]
see \cite{BergerGauduchonMazet1971,Chavel1984}. In particular,
\(
\mu_1=8(\ell+1).
\)
Taking $\ell=2$, we obtain
\(
\mu_1=24.
\)
Consequently,
\[
\lambda_2(L)=\mu_1-k=24-8=16.
\]

\medskip
\noindent
\emph{Case 2: the Cayley line.}
The Cayley line is a totally geodesic submanifold satisfying
\(
\Oo P^1\cong S^8(1/2).
\)
The first nonzero Laplace eigenvalue of the round sphere $S^k(\rho)$
is $k/\rho^2$. Therefore
\(
\mu_1\bigl(S^8(1/2)\bigr)
=
\frac{8}{(1/2)^2}
=
32.
\)
Since $\sigma=0$ and $k=8$, it follows that
\[
\lambda_2(L)=32-8=24.
\]

\medskip
\noindent
\emph{Case 3: geodesic spheres in a Cayley line.}
Consider a totally geodesic round subspace
\(
S^{k+1}(1/2)\subset S^8(1/2)\cong\Oo P^1
\)
and a geodesic sphere
\(
S^k(r)\subset S^{k+1}(1/2).
\)
Since all the successive ambient inclusions are totally geodesic, the
second fundamental form of $S^k(r)$ in $\Oo P^2$ agrees with its second
fundamental form in $S^{k+1}(1/2)$.

The round sphere $S^{k+1}(1/2)$ has constant sectional curvature $4$.
The metric induced on the geodesic sphere of radius $r$ is the round
metric of radius
\(
\frac12\sin(2r).
\)
Its first nonzero Laplace eigenvalue is therefore
\[
\mu_1
=
\frac{k}{\frac14\sin^2(2r)}
=
\frac{4k}{\sin^2(2r)}.
\]

The geodesic sphere is totally umbilical, and all its principal
curvatures are equal to
\(
2\cot(2r).
\)
It follows that
\(
|\sigma|^2=4k\cot^2(2r).
\)
Hence
\begin{align*}
\lambda_2(L)
&=
\frac{4k}{\sin^2(2r)}
-
4k\cot^2(2r)
-
k
=
4k
\left(
\frac{1-\cos^2(2r)}{\sin^2(2r)}
\right)
-k=3k.
\end{align*}
Thus $\lambda_2(L)$ is independent of the radius. Notice that
$r=\pi/4$ corresponds to a totally geodesic great sphere, whereas
$r\neq\pi/4$ gives a non-totally-geodesic extrinsic sphere.
\end{proof}

\begin{remark}\label{rem:Cayley-uncovered}
Proposition~\ref{prop:umbilical-Cayley} treats the explicit totally
umbilical models for which the induced metric and the second
fundamental form are directly available. The general classification
also allows non-totally-geodesic totally umbilical submanifolds
contained in a totally geodesic copy of $\Hh P^2$. These remaining
models are not covered by Proposition~\ref{prop:umbilical-Cayley},
since their spectral contribution and the corresponding value of
$\mathcal A_{\Oo}(T_p\Sigma)$ require a separate analysis.

Consequently, the preceding calculations determine the second
eigenvalue on the standard totally geodesic models and on the
geodesic spheres contained in a Cayley line, but they do not provide
a complete spectral classification of all totally umbilical
submanifolds of $\Oo P^2$. For this reason, in the equality statement
of Theorem~\ref{thm:Cayley}, we retain the conclusion that equality
forces $\Sigma$ to be totally umbilical, without claiming that it must
be totally geodesic.
\end{remark}

\section*{Funding}
The authors were partially supported by the Brazilian National Council for Scientific and Technological Development, Brazil [Grants: 402563/2023-9 and 304381/2026-8 to M.B.; 309867/2023-1 and 445723/2025-4 to A.M.], and were partially supported by Coordination for the Improvement of Higher Education Personnel [Finance code - 001].

\bibliographystyle{amsplain}
\bibliography{bibliography.bib}

\end{document}